\documentclass[11pt]{amsart}
\usepackage[T1]{fontenc}
\usepackage{amsfonts}
\pagestyle{plain}
\makeatletter
\theoremstyle{plain}
\newtheorem{thm}{Theorem}[section]
\theoremstyle{plain}
\numberwithin{equation}{section}
\numberwithin{figure}{section} 
\theoremstyle{plain}
\newtheorem{cor}[thm]{Corollary} 
\theoremstyle{plain}
\newtheorem{lem}[thm]{Lemma} 
\theoremstyle{plain}
\newtheorem{pr}[thm]{Proposition} 
\theoremstyle{plain}
\newtheorem{rem}[thm]{Remark}
\theoremstyle{plain}
\newtheorem{definition}[thm]{Definition} 

\textwidth = 17truecm \textheight = 25truecm \topmargin = -0.5truecm
\voffset=-0.5cm \hoffset=-2.5cm \baselineskip=13truept
\parindent=0.7truecm
\parskip=0truept
\begin{document}

\title{Riesz multiplier convergent spaces of operator valued series and a version of Orlicz- Pettis theorem}
\author{Mahmut Karaku\c s 
and Ramazan Kama}
\keywords{Riesz summability; operator valued series; multiplier convergent series; summing operator; Orlicz-Pettis Theorem}
\subjclass[2000]{46B15, 40A05, 46B45, 40C05}
\address[Mahmut Karaku\c s]{Van Y\"uz\"unc\"u Y{\i}l University, Faculty of Science, Department of Mathematics, 65100 - Van, Turkey}
\email{matfonks@gmail.com, mkarakus@yyu.edu.tr}
\address[Ramazan Kama] {S\.{i}\.{i}rt University, Faculty of Education, Department of Mathematics and Physical Sciences Education, 56100 - S\.{i}\.{i}rt, Turkey}
\email{ramazankama@siirt.edu.tr}

\begin{abstract}
It is not usual to characterize an operator valued series via completeness of multiplier spaces. In this study, by using a series of bounded linear operators, we introduce the space $M^\infty_{R}\big(\sum_k T_k\big)$ of Riesz summability which is a generalization of the Ces\`{a}ro summability. Therefore, we give the completeness criteria of these spaces with $c_0(X)$-multiplier convergent operator series. It is a natural consequence that one can characterize the completeness of a normed space through $M^\infty_{R}\big(\sum_k T_k\big)$ which will be assumed that is complete for every $c_0(X)$-multiplier Cauchy operator series. Then, we characterize the continuity and the (weakly) compactness of the summing operator $\mathcal{S}$ from the multiplier space $M^\infty_{R}\big(\sum_k T_k\big)$ to an arbitrary normed space $Y$ through $c_0(X)$-multiplier Cauchy and $\ell_\infty(X)$-multiplier convergent series, respectively. We also prove that if $\sum_kT_k$ is $\ell_\infty(X)$-multiplier Cauchy, then the multiplier space of weakly Riesz-convergence associated to the operator valued series $M^\infty_{wR}\big(\sum_k T_k\big)$ is subspace of $M^\infty_{R}\big(\sum_k T_k\big)$. Among other results, finally, we obtain a new version of the well-known Orlicz-Pettis theorem by using Riesz-summability.
\end{abstract}
\maketitle
\section{Introduction}
Let $\mathbb{N}$ be the set of positive integers, $\mathbb{R}$ and $\mathbb{C}$ also be the real and complex fields as usual, respectively. By $\omega$, we denote the space of all real (or complex) valued sequences and any vector subspace of $\omega$ is also called as a \textit{sequence space}. A $K$ space is a locally convex sequence space $X$ containing $\phi$ on which coordinate functionals $\pi_{k}(x)=x_{k}$ are continuous for every $k\in\mathbb{N}$, the set of positive integers, where $\phi$ is the space of finitely non-zero sequences spanned by the set $\{e^{k}:k\in\mathbb{N}\}$. $e^{k}$ is the sequence whose only non-zero term is 1 in the $k^{th}$ place for all $k\in\mathbb{N}$, and $e$ is also the sequence with $e=(1,1,...)$. A complete linear metric (or complete normed) $K$ space is called an $FK$ (or a $BK$) space. Let $X\supset\phi$ be a $BK$ space and $x=(x_{k})\in X$. Then, by $x^{[n]}=\sum_{k=1}^{n}x_{k}e^{k}$ for all $n\in\mathbb{N}$, we denote the $n^{th}$ section of $x$. It is said that $X\supset\phi$ is an $AK$ space if $\big\|x^{[n]}-x\big\|_X\to 0$, as $n\to\infty$, for each $x=(x_{k})\in X$. The spaces $\ell_\infty$, $c$ and $c_0$ of bounded, convergent and null sequences are $BK$ spaces, respectively, with the sup norm $\|x\|_{\infty}=\sup_{k\in\mathbb{N}}|x_{k}|$.

Let $X$, $Y$ be any two sequence spaces and $A=(a_{nk})$ be an infinite matrix of complex numbers $a_{nk}$, where $k,n\in\mathbb{N}$.  Then, we say that $A$ defines a \textit{matrix transformation} from $X$ into $Y$ and we denote it by writing $A:X\to Y,$ if for every sequence $x=(x_{k})\in X$ the sequence $Ax=\{(Ax)_{n}\}$, the $A$-transform of $x$, is in $Y$; where
\begin{eqnarray}\label{eq11}
(Ax)_{n}=\sum_{k}a_{nk}x_{k}.
\end{eqnarray}For simplicity in notation, here and after, the summation without limits runs from 1 to $\infty$. By $(X:Y)$, we denote the class of all matrices $A$ such that $A:X\to Y$. Thus, $A\in(X:Y)$ if and only if the series on the right side of (\ref{eq11}) converges for
each $n\in\mathbb{N}$. Furthermore, the sequence $x$ is said to be $A$-summable to $a\in\mathbb{C}$ if $Ax$ converges
to $a$ which is called the $A$-limit of $x$. The reader can refer to \cite{basar1, boos1} and \cite{wila3} for recent results and related topics in summability.

We say that the series $\sum_{k}x_{k}$ in a normed space $X$ is unconditionally convergent ($uc$) or unconditionally Cauchy ($uC$) if the series
$\sum_{k}x_{\pi(k)}$ converges or is a Cauchy series for every permutation $\pi$ of $\mathbb{N}$. It is called weakly unconditionally Cauchy ($wuC$) if for every permutation $\pi$ of $\mathbb{N}$, the sequence $\big(\sum_{k=1}^nx_{\pi(k)}\big)$ is a weakly Cauchy sequence or as a useful result, $\sum_{k}x_{k}$ is $wuC$ if and only if $\sum_{k}|x^*(x_{k})|<\infty$ for all $x^*\in X^*$, the space of all linear and bounded (continuous) functionals defined on $X$. It is well known that every $wuC$ series in a Banach space $X$ is $uc$ if and only if $X$ contains no copy of $c_0$; (cf. \cite[pp. 42, 44]{kalton}, \cite[p. 44]{dies} and \cite[p. 18]{jurg}). The reader can refer to the Diestel' s famous monograph \cite{dies} given on the theory of sequences and series in Banach spaces, Albiac and Kalton \cite{kalton} for specific investigations of Banach spaces and Marti's  \cite{jurg} for basic sequences and fundamentals of bases.

Let $X$ and $Y$ be two normed spaces and $\omega(X)$ be the space of all $X$-valued sequences. By $\ell_\infty(X)$, $c(X)$ and $c_0(X)$, we denote the spaces of all $X$-valued bounded, convergent and null sequences, respectively. $\phi(X)$ also denotes the space of $X$-valued finitely non-zero sequences. Let $\mathcal{V}$ be a vector space of $X$-valued sequences equipped with a locally convex Hausdorff topology. If the maps $x=(x_k)\mapsto x_k$ from $\mathcal{V}$ into $X$ are continuous for all $k\in\mathbb{N}$, then $\mathcal{V}$ is called as a $K$ space. If $x\in X$, then by $e^{k}\otimes x$, we denote the sequence whose only non-zero term is $x$ in the $k^{th}$ place for all $k\in\mathbb{N}$. If $\phi(X)\subset\mathcal{V}$ and $T_k\in B(X:Y)$, the space of all bounded and linear operators defined from $X$ into $Y$ for all $k\in\mathbb{N}$, then we say that the series $\sum_kT_k$ is $\mathcal{V}$-multiplier convergent or $\mathcal{V}$-multiplier Cauchy if the series $\sum_k T_kx_k$ converges in $Y$ or is a Cauchy series in $Y$, i.e., the partial sums of the series $\sum_k T_k x_k$ form a norm Cauchy sequence in $Y$ for all $x=(x_k)\in\mathcal{V}$. The reader may refer to \cite{s2} for the vector valued multiplier spaces associated with operator valued series (OVS) and more detailed information on multiplier convergent series.

The multiplier form of a series $\sum_kx_k$ in a Banach space $X$ associated with an arbitrary real or complex sequence $a=(a_k)$ is given as $\sum_ka_kx_k$ and is also important to understand the behavior of the series $\sum_kx_k$ in $X$. The series $\sum_kx_k$ in $X$ is $wuC$ (or $uc$) series if and only if $\sum_ka_kx_k$ is convergent for every null (or bounded) sequence $a=(a_k)$, that is, $\sum_kx_k$ is a $c_0$-(or an $\ell_\infty$-) multiplier convergent series. As another example, a series $\sum_kx_k$ is subseries convergent if and only if it is $m_0$-multiplier convergent series, where $m_0$ is the space of all finite range sequences. Let also recall that a series $\sum_{k}x_{k}$ is subseries convergent if and only if $\sum_{k}x_{n_{k}}$ is convergent for all subsequences $(n_{k})$ of $\mathbb{N}$. There is also an important result on subseries convergence which states that in a normed space $X$, a subseries convergent series is $\ell_\infty$-multiplier Cauchy, if also $X$ is sequentially complete then the series is $\ell_\infty$-multiplier convergent.

The studies of characterizations of Banach spaces, and obtaining new multiplier spaces by means of  summability methods have been an interesting field of research in the theory of modern analysis. In \cite{McArthur}, McArthur investigated some closed linear subspaces of the space of the weakly unconditional summable sequences in a Banach space $X$ and gave some inclusion relations between these spaces. One can refer to the Aizpuru et al. \cite{a1,a4}, P\'{e}rez-Fern\'{a}ndez et al. \cite{a5},
Kama and Altay \cite{al3}, Kama et al. \cite{al4}, Kama \cite{kama,kama1}, Karaku\c s \cite{karakus1}, Karaku\c s and Ba\c sar \cite{fbmk1, fbmk2,fbmk4} and Swartz \cite{s2,s1} for recent studies on multiplier convergent series.

Through the rest of the paper, we deal with the Riesz transformation which is a generalization of Ces\`{a}ro mean $C_{1}$ of order one given as
\begin{eqnarray}\label{ces}
\begin{array}{clccl}
C_{1} & : &\omega&\longrightarrow &\omega\\
          &    & x=(x_{k})&\longmapsto &C_{1}x=\left(\frac{1}{n}\sum_{k=1}^nx_k\right)_{n\in\mathbb{N}}.
\end{array}
\end{eqnarray}
A series $\sum_kx_k$ in a normed space $X$ is said to be \textit{Ces\`{a}ro} or \textit{weak Ces\`{a}ro convergent to $x_0\in X$} and is denoted by $C_1-\sum_kx_k=x_0$ or $wC_1-\sum_kx_k=x_0$, if
\begin{eqnarray*}
\lim_{n\to\infty}\left[\frac{1}{n}\sum_{k=1}^n(n-k+1)x_k\right]=x_0
\end{eqnarray*}
or
\begin{eqnarray*}
\lim_{n\to\infty}\left[\frac{1}{n}\sum_{k=1}^n(n-k+1)x^*(x_k)\right]=x^*(x_0)
\end{eqnarray*}
for all $x^*\in X^*$. Riesz transformation is also given by
\begin{eqnarray*}\label{riesz}
Rx=\left(\frac{1}{R_n}\sum_{k=1}^nr_kx_k\right)_{n\in\mathbb{N}}
\end{eqnarray*}
for any sequence $x=(x_k)_{k\in\mathbb{N}}\in\omega$, where $r=(r_k)$ is a sequence of nonnegative reals with $r_1>0$ such that
\begin{eqnarray}\label{rieszc} R_n=\sum_{k=1}^nr_k~\textrm{ with }~\lim_{n\to\infty}R_{n}=\infty.
 \end{eqnarray}
The Riesz method is regular under the condition (\ref{rieszc}) and is reduced for $r=e$ to the Ces\`{a}ro mean of order one. The corresponding matrix $R=(r_{nk})$ to the Riesz method with respect to the sequence $r=(r_{k})$ can be given, as follows;
\begin{eqnarray*}
r_{nk}:=\left\{\begin{array}{ccl}
\frac{r_{k}}{R_{n}} &,& 1\leq k\leq n,\\
0&,& k>n
\end{array} \right.
\end{eqnarray*}
for all $k,n\in\mathbb{N}$.

In this study, our main purpose is to give some results related to the some new multiplier by using Riesz summability method as a generalization of Ces\`{a}ro summability, and construct some new classes of sequence spaces associated to an OVS in a Banach space having similar properties with the classes defined in \cite{al2,karakus1,fbmk2,fbmk4} and \cite{s1}. Among other results, we give some new characterizations of $c_0(X)$- and $\ell_\infty(X)$-multiplier convergent series through this method.

The following lemma states a classical result of the $wuC$ series in a normed space $X$, and gives an example of multiplier convergent series which characterizes  unconditionally convergent series and weakly unconditionally Cauchy series with $\ell_\infty$-multiplier convergent series and $c_0$-multiplier convergent series, respectively; \cite[p. 44]{dies}.
\begin{lem}\label{l1}
The following statements hold:
\begin{enumerate}
\item[(a)] In a normed space $X$, a formal series $\sum_nx_n$ is a $wuC$ series if and only if there exists a positive real $H$ such that
\begin{eqnarray*}\label{s}
H=\sup_{n\in\mathbb{N}}\left\{\left\|\sum_{k=1}^na_{k}x_{k}\right\|:|a_{k}|\leq1,~k=1, 2,\ldots ,n \right\}.
 \end{eqnarray*}
\item[(b)] A formal series $\sum_nx_n$ in a
Banach space $X$ is $uc$ (respectively $wuC$) series if and only if for any $(t_n)\in \ell_\infty$ (respectively for any $(t_n)\in c_0$), $\sum_nt_nx_n$ converges, that is, $\sum_nx_n$ is an $\ell_\infty$-(respectively a $c_0$-) multiplier convergent series.
\end{enumerate}
\end{lem}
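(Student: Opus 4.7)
My plan is to handle (a) via the uniform boundedness principle, and then to bootstrap (b) from (a) combined with a block-partitioning argument.

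For part (a), the forward direction rests on Banach--Steinhaus. The hypothesis that $\sum_n x_n$ is $wuC$ gives, via the functional characterization recalled in the introduction, $\sum_k |x^*(x_k)|<\infty$ for every $x^*\in X^*$. For any finite tuple of scalars with $|a_k|\le 1$ and every $x^*\in X^*$,
\[
\left|x^*\left(\sum_{k=1}^n a_k x_k\right)\right|\le \sum_{k=1}^n|x^*(x_k)|\le \sum_{k=1}^\infty |x^*(x_k)| < \infty,
\]
so the family of vectors $\sum_{k=1}^n a_k x_k$ is weakly bounded in $X$, hence norm bounded through the canonical embedding $X\hookrightarrow X^{**}$; this is exactly finiteness of $H$. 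Conversely, assuming $H$ finite and fixing $x^*\in X^*$, I would choose unimodular scalars $a_k$ with $a_k x^*(x_k)=|x^*(x_k)|$ to obtain $\sum_{k=1}^n |x^*(x_k)|=x^*\bigl(\sum_{k=1}^n a_k x_k\bigr)\le \|x^*\|H$; letting $n\to\infty$ gives the $wuC$ property.

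For the $wuC$/$c_0$-multiplier equivalence in part (b), the forward direction applies (a). Given $(t_n)\in c_0$ and $m>n$, set $M_n=\sup_{k>n}|t_k|$; when $M_n>0$ the rescaled coefficients $t_k/M_n$ have modulus at most $1$, so (a) yields
\[
\left\|\sum_{k=n+1}^m t_k x_k\right\|\le M_n H,
\]
and $M_n\to 0$ makes the partial sums Cauchy, hence convergent in the Banach space $X$. The converse is by contradiction: if $\sum_n x_n$ fails to be $wuC$, then some $x^*\in X^*$ has $\sum_n|x^*(x_n)|=\infty$; one partitions $\mathbb{N}$ into successive finite blocks $A_j$ with $\sum_{k\in A_j}|x^*(x_k)|\ge 1$ and defines $t_k=\epsilon_k/j$ on $A_j$, where $\epsilon_k$ are unimodular scalars realizing $\epsilon_k x^*(x_k)=|x^*(x_k)|$. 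Then $(t_n)\in c_0$, while $x^*$ applied to the partial sums of $\sum_k t_k x_k$ grows at least like the harmonic series and diverges, contradicting convergence. The $uc$/$\ell_\infty$ equivalence is analogous: the converse specializes to indicator multipliers $t_n=\chi_A(n)$ to produce subseries convergence (equivalent to $uc$), and the forward direction upgrades subseries convergence to $\ell_\infty$-multiplier convergence via a telescoping estimate built on (a) together with completeness of $X$.

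The main obstacle I anticipate is keeping the distinction between normed and Banach hypotheses clean: part (a) and the converses in (b) go through in an arbitrary normed space, whereas the forward implications in (b) crucially use completeness to upgrade Cauchy partial sums to genuine limits. The combinatorial block construction in the converse of the $c_0$ half is the key non-routine step, since the blocks must simultaneously collect enough $|x^*(x_k)|$-mass to force divergence of $x^*\bigl(\sum_k t_k x_k\bigr)$ and be indexed by weights $1/j$ tending to $0$ so that $(t_n)\in c_0$.
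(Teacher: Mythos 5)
The paper offers no proof of this lemma at all: it is quoted as a classical fact with a pointer to Diestel's monograph, so there is no in-paper argument to measure yours against. Your proof is essentially the standard textbook one and is correct where you give details. Part (a) is fine: weak boundedness of the finite sums $\sum_{k=1}^{n}a_{k}x_{k}$ plus Banach--Steinhaus applied through the embedding into $X^{**}$ (legitimate even for incomplete $X$, since $X^{*}$ is always complete) gives the forward direction, and the unimodular-scalar choice gives the converse. The $c_{0}$ half of (b) is complete in both directions; the rescaling by $M_{n}=\sup_{k>n}|t_{k}|$ correctly reduces the Cauchy estimate to (a), and the block construction with weights $1/j$ on blocks of $|x^{*}(x_{k})|$-mass at least $1$ is exactly the right device for the converse. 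The one genuinely under-specified step is the implication ``$uc$ implies $\ell_\infty$-multiplier convergent'': part (a) by itself only yields a uniform bound $H$ and says nothing about tails going to zero, so a ``telescoping estimate built on (a)'' cannot carry this direction alone. You need the Cauchy criterion for subseries (equivalently unconditional) convergence --- for every $\epsilon>0$ there is $N$ such that $\bigl\|\sum_{k\in F}x_{k}\bigr\|<\epsilon$ for every finite $F\subset\{N,N+1,\dots\}$ --- combined with the extreme-point estimate $\sup\bigl\{\bigl\|\sum_{k\in F}a_{k}x_{k}\bigr\|:|a_{k}|\le1\bigr\}\le 4\sup_{G\subset F}\bigl\|\sum_{k\in G}x_{k}\bigr\|$ (factor $2$ for real scalars), after which bounded multipliers give a Cauchy series and completeness of $X$ finishes. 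With that step made explicit the argument is a correct and complete proof of the cited classical result.
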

\section{Vector Valued Multiplier Spaces Through Riesz Summability and compact summing operator}
In this section, we give some definitions on Riesz convergent or Riesz summable sequences in a real normed space.
\begin{definition}\label{def11}
A sequence $x=(x_{k})$ in a real normed space $X$ is said to be Riesz convergent ($R$-convergent) and weakly Riesz convergent ($wR$-convergent) to $x_0\in X$ which is called the $R$-limit and $wR$-limit of $x$, and is denoted by $R-\lim_{k\to\infty} x_{k}=x_0$ and $wR-\lim_{k\to\infty}x_{k}=x_0$, if
\begin{eqnarray*}
\lim_{n\to\infty}\left\|\frac{1}
{R_n}\sum_{k=1}^{n}r_kx_{k}-x_0\right\|=0
\end{eqnarray*}
and
\begin{eqnarray*}
\lim_{n\to\infty}\left|\frac{1}
{R_n}\sum_{k=1}^{n}r_kx^*(x_{k})-x^*(x_0)\right|=0~\textrm{ for all }~x^*\in X^*
\end{eqnarray*}
hold, respectively.
\end{definition}
Riesz summability of a sequence $x=(x_{k})$ (or $R$- and $wR$-convergence of a series) in a real normed space $X$ is also given, as follows:
\begin{definition}\label{def12}
A series $\sum_kx_{k}$ in a real normed space $X$ is said to be Riesz convergent ($R$-convergent) and weakly Riesz convergent ($wR$-convergent) to $x_0\in X$ which is called the $R$-sum and $wR$-sum of $x$, and is denoted by $R-\sum_k x_{k}=x_0$ and $wR-\sum_kx_{k}=x_0$, if
\begin{eqnarray*}
\lim_{n\to\infty}\left\|\frac{1}
{R_n}\sum_{k=1}^{n}r_ks_{k}-x_0\right\|=0
\end{eqnarray*}
and
\begin{eqnarray*}
\lim_{n\to\infty}\left|\frac{1}
{R_n}\sum_{k=1}^{n}r_kx^*(s_{k})-x^*(x_0)\right|=0~\textrm{ for all }~x^*\in X^*
\end{eqnarray*}
hold, respectively. Here, $s_{k}=\sum_{j=1}^{k}x_j$ is the sequence of partial sums of the series $\sum_kx_{k}$.
\end{definition}

In the following, we introduce the spaces of $R$-convergence and weakly $R$-convergence of multipliers with association of an OVS, and obtain some new characterizations of $c_0(X)$- and $\ell_\infty(X)$-multiplier convergent (Cauchy) series. We also present the necessary and sufficient conditions for the continuity and (weak) compactness of the summing operator $\mathcal{S}$ defined from these multiplier spaces to another normed space $Y$, and derive some results by the previous works \cite{al2,fbmk2} and \cite{s1}.

\begin{definition}
 Suppose that $X$ and $Y$ are two normed spaces and $T_k\in B(X:Y)$ for all $k\in\mathbb{N}$. We define the space $M^\infty_{R}\big(\sum_k T_k\big)$ of $R$-convergence associated to the $\sum_k T_k$ as;
\begin{eqnarray}\label{main}
M^\infty_{R}\big(\sum_k T_k\big):=\left\{x=(x_k)\in\ell_\infty(X):\sum_k T_k x_{k}~ \textrm{is }R\textrm{-convergent}\right\},
\end{eqnarray}
and endowed with the sup norm.
 \end{definition}
It can be easily checked that the inclusions
\begin{eqnarray}\label{inc}
\phi(X) \subseteq M^\infty_{R}\big(\sum_k T_k\big)\subseteq\ell_\infty(X)
\end{eqnarray}
hold.

Firstly, we give the following theorem related to the completeness of the normed space $M^\infty_{R}\big(\sum_k T_k\big)$ through $c_0(X)$-multiplier convergence.
\begin{thm}\label{t1} Regarding two Banach spaces $X$ and $Y$, and a series  $\sum_k T_k$, where $T_k\in B(X:Y)$ for all $k\in\mathbb{N}$, the following assertions are equivalent:
\begin{enumerate}
\item[(i)] The series $\sum_k T_k$ is $c_0(X)$-multiplier convergent.
\item[(ii)] The space $M^\infty_{R}\big(\sum_k T_k\big)$ is a Banach space.
\end{enumerate}
\end{thm}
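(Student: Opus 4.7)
The plan is to treat both directions through the Riesz-average operators
\[
\sigma_N(x):=\frac{1}{R_N}\sum_{k=1}^N r_k\sum_{j=1}^kT_jx_j=\sum_{k=1}^N\Bigl(1-\frac{R_{k-1}}{R_N}\Bigr)T_kx_k
\]
(with $R_0:=0$), combined with the Banach--Steinhaus theorem and the equality $c_0(X)=\overline{\phi(X)}^{\ell_\infty(X)}$.

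For (i)$\Rightarrow$(ii), I first use the $c_0(X)$-multiplier convergence of $\sum_kT_k$ to control the partial sum operators $S_n:c_0(X)\to Y$, $S_n(z)=\sum_{k=1}^nT_kz_k$. Pointwise convergence on the Banach space $c_0(X)$ together with Banach--Steinhaus yields $M:=\sup_n\|S_n\|<\infty$; by feeding in finitely supported truncations of an arbitrary $x\in\ell_\infty(X)$ I promote this to $\|\sum_{k\in F}T_kx_k\|\leq M\|x\|_\infty$ for every finite $F\subset\mathbb{N}$, which in turn forces $\|\sigma_N(x)\|\leq M\|x\|_\infty$ on all of $\ell_\infty(X)$. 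To show $M^\infty_R\bigl(\sum_kT_k\bigr)$ is closed in $\ell_\infty(X)$, I take $x^{(n)}\to x$ in $\ell_\infty(X)$ with $x^{(n)}\in M^\infty_R\bigl(\sum_kT_k\bigr)$ and set $y_n:=\lim_N\sigma_N(x^{(n)})$; letting $N\to\infty$ in $\|\sigma_N(x^{(n)}-x^{(m)})\|\leq M\|x^{(n)}-x^{(m)}\|_\infty$ makes $(y_n)$ Cauchy in $Y$, and a routine three-term comparison then gives $\sigma_N(x)\to\lim_n y_n$, so that $x\in M^\infty_R\bigl(\sum_kT_k\bigr)$.

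For (ii)$\Rightarrow$(i), completeness of $M^\infty_R\bigl(\sum_kT_k\bigr)$ in the sup norm is the same as closedness in the Banach space $\ell_\infty(X)$. From $\phi(X)\subseteq M^\infty_R\bigl(\sum_kT_k\bigr)$ and $\overline{\phi(X)}^{\ell_\infty(X)}=c_0(X)$ I deduce $c_0(X)\subseteq M^\infty_R\bigl(\sum_kT_k\bigr)$, so $\sum_kT_kz_k$ is Riesz convergent for every $z\in c_0(X)$; what must be upgraded is the passage from Riesz convergence to ordinary convergence. I apply Banach--Steinhaus to the restrictions of $\sigma_N$ to $M^\infty_R\bigl(\sum_kT_k\bigr)$: each is individually bounded (with bound $\sum_{k=1}^N\|T_k\|$), they converge pointwise by the very definition of the space, and the space is complete, which produces a uniform bound $M'$ for their operator norms. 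Hence the Riesz-sum map $\sigma:M^\infty_R\bigl(\sum_kT_k\bigr)\to Y$ is continuous with respect to the sup norm. Now for $z\in c_0(X)$ the truncations $z^{[m]}$ lie in $\phi(X)$, satisfy $z^{[m]}\to z$ in $\ell_\infty(X)$, and because $1-R_{k-1}/R_N\to 1$ for each fixed $k$ as $N\to\infty$ we have $\sigma(z^{[m]})=\sum_{k=1}^mT_kz_k$. Continuity of $\sigma$ then forces $\sum_{k=1}^mT_kz_k\to\sigma(z)$ in $Y$.

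The principal obstacle is direction (ii)$\Rightarrow$(i), where the hypothesis supplies only Riesz convergence for $c_0(X)$-multipliers whereas the conclusion demands ordinary norm convergence. The decisive observation is that Riesz and ordinary sums coincide on the dense subspace $\phi(X)\subseteq c_0(X)$, so continuity of the Riesz-sum functional, obtained via Banach--Steinhaus on the complete $M^\infty_R\bigl(\sum_kT_k\bigr)$, is precisely the mechanism that transports ordinary convergence from truncations to arbitrary $z\in c_0(X)$.
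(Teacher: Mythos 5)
Your argument is correct, and while your forward direction is essentially the paper's (both extract a uniform bound $\sup_n\big\|\sum_{k=1}^nT_kx_k\big\|\le M\|x\|_\infty$ --- the paper by quoting Lemma \ref{l1}(a), you by Banach--Steinhaus for the partial-sum operators on $c_0(X)$ --- note that the Riesz means $\sigma_N(x)$ are convex combinations of these partial sums, and then run the standard three-term closedness argument in $\ell_\infty(X)$), your reverse direction takes a genuinely different route. After the common first step (closedness of $M^\infty_{R}\big(\sum_k T_k\big)$ together with $\phi(X)\subseteq M^\infty_{R}\big(\sum_k T_k\big)$ and $\overline{\phi(X)}=c_0(X)$ gives $c_0(X)\subseteq M^\infty_{R}\big(\sum_k T_k\big)$), the paper upgrades Riesz convergence to norm convergence by passing to subseries $R$-convergence via the monotonicity of $c_0(X)$, then to weak subseries $R$-convergence, and finally invoking the Orlicz--Pettis theorem for regular matrices from \cite{a4}, which even yields $\ell_\infty(X)$-multiplier convergence. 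You instead apply Banach--Steinhaus to the Riesz means $\sigma_N$ on the assumed-complete space $M^\infty_{R}\big(\sum_k T_k\big)$ to make the Riesz-sum map continuous in the sup norm, observe that by regularity of the Riesz method this map agrees with the ordinary sum on $\phi(X)$, and transport ordinary convergence to every $z\in c_0(X)$ by density of the truncations $z^{[m]}$. Your route is more elementary and self-contained, using nothing beyond uniform boundedness; the paper's route costs an external Orlicz--Pettis citation but delivers the formally stronger conclusion of $\ell_\infty(X)$-multiplier convergence as a by-product.
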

\begin{proof} (i)$\Rightarrow$(ii):
Let us suppose that the series $\sum_k T_k$ is $c_0(X)$-multiplier convergent. Then, from Part (a) of Lemma \ref{l1} there exists $H>0$ such that
\begin{eqnarray*}
H=\sup_{n\in\mathbb{N}}\left\{\left\|\sum_{k=1 }^nT_kx_k\right\|:\|x_k\|\leq 1~\textrm{ for all }~k\in\{1,2,\ldots,n\}\right\}.
\end{eqnarray*}
Now suppose that $(x^m)=(x_k^m)_{k\in\mathbb{N}}$ is a Cauchy sequence in $M^\infty_{R}\big(\sum_k T_k\big)$. Then, since $\ell_\infty(X)$ is a Banach space and the inclusion relation (\ref{inc}) holds, one can find $x^0=(x_k^0)\in \ell_\infty(X)$ such that $x^m\to x^0$, as $m\to\infty$. Now, we shall prove that $x^0\in M^\infty_{R}\big(\sum_k T_k\big)$. For an arbitrary $\epsilon>0$, we have $m_0\in\mathbb{N}$ satisfying
\begin{eqnarray*}
\|x^m-x^0\|<\frac{\epsilon}{3H}
\end{eqnarray*}
for every $m>m_0$. Now, it is immediate by easy calculation that
\begin{eqnarray*}
\frac{3H}{\epsilon}\left\|\frac{1}{R_n}\sum_{k=1}^{n} r_k\sum_{j=1}^kT_j(x_j^m-x_j^0)\right\|\leq H
\end{eqnarray*}
for every $m>m_0$ and $k\in\mathbb{N}$. Therefore, for every $\epsilon>0$ there exists $m_0\in\mathbb{N}$ such that
\begin{eqnarray*}
\left\|\frac{1}{R_n}\sum_{k=1}^{n} r_k\sum_{j=1}^kT_j(x_j^m-x_j^0)\right\|<\frac{\epsilon}{3}
\end{eqnarray*}
for all $m>m_0$ and $k\in\mathbb{N}$. Since $(x^m)$ is a Cauchy sequence in $M^\infty_{R}\big(\sum_k T_k\big)$, we have a sequence $(y_m)$ in the Banach space $Y$ for which the following inequality
\begin{eqnarray*}
\left\|\frac{1}{R_n}\sum_{k=1}^{n} r_k\sum_{j=1}^kT_j(x_j^m-y_m)\right\|<\frac{\epsilon}{3}
\end{eqnarray*}
holds for every $n>n_0$. Then, since $Y$ is a Banach space and $(y_m)$ is a Cauchy sequence we can find $y_0\in Y$ such that $y_m\to y_0$, as $m\to\infty$, and so for every $\epsilon>0$,
\begin{eqnarray*}
\|y_m-y_0\|<\frac{\epsilon}{3}
\end{eqnarray*}
holds. Finally, for every $\epsilon>0$ there exists $n_0\in\mathbb{N}$ such that
\begin{eqnarray*}
 \left\|\frac{1}{R_n}\sum_{k=1}^{n} r_k\sum_{j=1}^kT_jx_j^0-y_0\right\|&\leq&\left\|\frac{1}{R_n}\sum_{k=1}^{n} r_k\sum_{j=1}^kT_j(x_j^0-x_j^m)\right\|+\\
 &+&\left\|\frac{1}{R_n}\sum_{k=1}^{n} r_k\sum_{j=1}^kT_jx_j^m-y_m\right\|+\|y_m-y_0\|\\
 &<&\epsilon
 \end{eqnarray*}
holds for every $n>n_0$. Hence, $ x^0\in M^\infty_{R}\big(\sum_{k}x_{k}\big)$.

(ii)$\Rightarrow$(i):
Let us suppose that the multiplier space $M^\infty_{R}\big(\sum_k T_k\big)$ is complete and let us take $x=(x_k)\in c_0(X)$. Then, we have $c_0(X)\subseteq M^\infty_{R}(\sum_k T_k)$ since the space $M^\infty_{R}(\sum_k T_k)$ is closed and the inclusion $\phi(X)\subset M^\infty_{R}(\sum_k T_k)$ holds. Therefore, the series $\sum_kT_kx_k$ is $R$-convergent for all $x=(x_k)\in c_0(X)$. From the monotonicity of $c_0(X)$, the series $\sum_{k}T_{k}x_{k}$ is subseries $R$-convergent, and so is weakly subseries $R$-convergent. From a version of Orlicz-Pettis theorem for regular matrices \cite[Theorem 4.1]{a4}, $\sum_kT_kx_k$ is subseries norm convergent, that is, the series $\sum_k T_k$ is $\ell_\infty(X)$-multiplier convergent, and so is $c_0(X)$-multiplier convergent.
 \end{proof}
 \begin{cor}\label{qw}
Let $X$ and $Y$ be any given two Banach spaces and a series  $\sum_k T_k$ with $T_k\in B(X:Y)$ for all $k\in\mathbb{N}$. Then, the following statements are equivalent:
\begin{enumerate}
\item[(i)] The series $\sum_k T_k$ is $c_0(X)$-multiplier convergent.
\item[(ii)] The inclusion $c_0(X)\subseteq M_R^\infty\big(\sum_k T_k\big)$ holds.
\end{enumerate}
 \end{cor}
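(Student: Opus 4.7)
The plan is to extract the inclusion-based content of Theorem~\ref{t1}: the completeness hypothesis on $M_R^\infty\big(\sum_k T_k\big)$ was only needed to guarantee the inclusion $c_0(X)\subseteq M_R^\infty\big(\sum_k T_k\big)$, and once this inclusion is assumed outright the rest of the argument goes through unchanged. Thus both implications should follow from the regularity of the Riesz method together with an Orlicz--Pettis theorem for regular matrices.

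For (i)$\Rightarrow$(ii) I would fix $x=(x_k)\in c_0(X)$, which certainly lies in $\ell_\infty(X)$. By hypothesis the series $\sum_k T_k x_k$ is norm convergent in $Y$. Since the Riesz method satisfies the regularity condition (\ref{rieszc}), the Riesz transform of any norm-convergent sequence in $Y$ converges to the same limit; applying this to the sequence of partial sums $s_k=\sum_{j=1}^{k}T_j x_j$ shows that $\sum_k T_k x_k$ is $R$-convergent, whence $x\in M_R^\infty\big(\sum_k T_k\big)$.

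For (ii)$\Rightarrow$(i) my plan is to replay the second half of the (ii)$\Rightarrow$(i) argument from Theorem~\ref{t1}. Given $x=(x_k)\in c_0(X)$, hypothesis (ii) makes $\sum_k T_k x_k$ an $R$-convergent series in $Y$. Since $c_0(X)$ is monotone (restricting to any subsequence $(n_k)$ keeps the sequence in $c_0(X)$), the series is in fact subseries $R$-convergent; composing with any $y^*\in Y^*$ yields weak subseries $R$-convergence. The Orlicz--Pettis theorem for regular matrices \cite[Theorem~4.1]{a4} then upgrades this to subseries norm convergence in $Y$. Hence $\sum_k T_k$ is subseries norm convergent on every $(x_k)\in c_0(X)$, which in particular gives $c_0(X)$-multiplier convergence.

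The main obstacle is essentially bookkeeping: one must verify that the Riesz matrix $R=(r_{nk})$ arising from $r=(r_k)$ satisfying (\ref{rieszc}) qualifies as a regular matrix in the sense required by \cite[Theorem~4.1]{a4}. Once that is confirmed, no machinery beyond what already appeared in the proof of Theorem~\ref{t1} is needed, and the corollary is essentially the observation that the completeness hypothesis in that theorem can be replaced by the inclusion itself.
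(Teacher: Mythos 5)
Your proposal is correct and follows essentially the same route the paper intends: the corollary is stated without proof immediately after Theorem~\ref{t1}, and its implicit argument is exactly what you describe --- (i)$\Rightarrow$(ii) via regularity of the Riesz method applied to the partial sums, and (ii)$\Rightarrow$(i) by rerunning the second half of the proof of Theorem~\ref{t1} (monotonicity of $c_0(X)$, weak subseries $R$-convergence, and the Orlicz--Pettis theorem for regular matrices from \cite{a4}), with the regularity of $R$ under (\ref{rieszc}) already noted in the paper's introduction.
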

 \begin{rem} Let $X$ and $Y$ be any given two Banach spaces, and $T_k\in B(X:Y)$ for all $k\in\mathbb{N}$. The multiplier spaces $M^\infty\big(\sum_k T_k\big)$, $M^\infty_C\big(\sum_k T_k\big)$ and $M^\infty_f\big(\sum_k T_k\big)$ are introduced in \cite{s1}, \cite{al2} and \cite{fbmk2}, respectively, as follows;
 \begin{eqnarray}\label{smain}
 M^\infty\big(\sum_k T_k\big)&:=&\left\{x=(x_k)\in\ell_\infty(X):\sum_kT_kx_k \textrm{ is  convergent}\right\},\\
\label{cmain}
M^\infty_C\big(\sum_k T_k\big)&:=&\left\{x=(x_k)\in\ell_\infty(X):\sum_kT_kx_k \textrm{ is  Ces\`{a}ro convergent}\right\},\\
\label{fmain}
M^\infty_f\big(\sum_k T_k\big)&:=&\left\{x=(x_k)\in\ell_\infty(X):\sum_kT_kx_k \textrm{ is almost convergent}\right\}.
\end{eqnarray}
Now, by considering the definitions of the multiplier spaces  $M^\infty_{R}\big(\sum_k T_k\big)$, $M^\infty\big(\sum_k T_k\big)$, $M^\infty_C\big(\sum_k T_k\big)$ and $M^\infty_{f}\big(\sum_k T_k\big)$ respectively given by (\ref{main}), (\ref{smain}), (\ref{cmain})and (\ref{fmain}), we have the following inclusions:
\begin{eqnarray*} M^\infty\big(\sum_k T_k\big)\subseteq M_f^\infty\big(\sum_k T_k\big)\subseteq M_C^\infty( \sum_k T_k)\subseteq M_R^\infty( \sum_k T_k).
\end{eqnarray*}
\end{rem}
 By bearing in mind Theorem \ref{t1} together with previous results given in \cite{al2} and \cite{fbmk2}, we have the following:
\begin{cor}\label{cor1} Let $X$ and $Y$ be Banach spaces, and $T_k\in B(X:Y)$ for all $k\in\mathbb{N}$. Then, the following assertions are equivalent:
\begin{enumerate}
\item[(i)] The series $\sum_k T_k$ is $c_0(X)$-multiplier convergent.
\item[(ii)] $M^\infty_{f}\big(\sum_k T_k\big)$ is a Banach space.
\item[(iii)] $M^\infty_C\big(\sum_k T_k\big)$ is a Banach space.
    \item[(iv)] $M^\infty_R\big(\sum_k T_k\big)$ is a Banach space.
\end{enumerate}
\end{cor}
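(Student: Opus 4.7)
The plan is to leverage Theorem~\ref{t1} together with the two results already available in the literature that treat the Cesàro and almost-convergence versions of the same question. Concretely, Theorem~\ref{t1} supplies the equivalence (i)$\Leftrightarrow$(iv); the equivalence (i)$\Leftrightarrow$(iii) is exactly the content of the main completeness characterization in \cite{al2}, where $M^\infty_C\big(\sum_k T_k\big)$ is shown to be Banach precisely when $\sum_k T_k$ is $c_0(X)$-multiplier convergent; and (i)$\Leftrightarrow$(ii) is the corresponding statement in \cite{fbmk2} for the almost-convergence multiplier space $M^\infty_f\big(\sum_k T_k\big)$. Since each of (ii), (iii), (iv) is already equivalent to (i), transitivity immediately yields the mutual equivalence asserted by the corollary.

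If a self-contained argument were preferred, I would mimic the proof of Theorem~\ref{t1} inside the inclusion chain
\[
M^\infty\big(\sum_k T_k\big)\subseteq M_f^\infty\big(\sum_k T_k\big)\subseteq M_C^\infty\big(\sum_k T_k\big)\subseteq M_R^\infty\big(\sum_k T_k\big)
\]
displayed in the preceding remark. For the forward direction (i)$\Rightarrow$(ii), (iii), (iv), I would take a Cauchy sequence $(x^m)$ in the relevant multiplier space, use completeness of $\ell_\infty(X)$ together with the inclusion into $\ell_\infty(X)$ to produce a candidate limit $x^0$, and then exploit the uniform bound from Lemma~\ref{l1}(a) to control the appropriate averaged partial sums of $\sum_j T_j x^0_j$ by inserting and subtracting $x^m$, exactly as in the $\epsilon/3$ estimate of Theorem~\ref{t1}. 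The averaging method (Cesàro, Banach-limit, or Riesz) does not affect the structure of the estimate, because each is a bounded regular sublinear operation on norm-bounded sequences in $Y$. For the reverse directions, the assumed completeness together with the inclusion $\phi(X)\subseteq M^\infty_\bullet\big(\sum_k T_k\big)$ forces $c_0(X)\subseteq M^\infty_\bullet\big(\sum_k T_k\big)$, so $\sum_k T_k x_k$ is weakly subseries $\bullet$-convergent; the Orlicz–Pettis theorem for regular matrices \cite[Theorem 4.1]{a4} then upgrades this to norm subseries convergence, giving (i).

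The only real obstacle is bookkeeping: one must confirm that the statements of \cite{al2} and \cite{fbmk2} are phrased in the form needed here (completeness of the multiplier space iff $c_0(X)$-multiplier convergence of $\sum_k T_k$) rather than in some slightly different but equivalent reformulation. Once this matching is verified, no genuinely new analytic content is needed and the corollary follows at once from Theorem~\ref{t1} by transitivity.
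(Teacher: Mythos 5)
Your first paragraph is exactly the paper's argument: the corollary is stated immediately after Theorem~\ref{t1} with the remark that it follows ``by bearing in mind Theorem~\ref{t1} together with previous results given in \cite{al2} and \cite{fbmk2},'' i.e.\ (i)$\Leftrightarrow$(iv) from Theorem~\ref{t1} and (i)$\Leftrightarrow$(iii), (i)$\Leftrightarrow$(ii) from the cited completeness characterizations, combined by transitivity. Your additional self-contained sketch goes beyond what the paper records but is consistent with how Theorem~\ref{t1} is proved.
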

\begin{rem}
Let $X$ and $Y$ be normed spaces and $T_k\in B(X:Y)$ for all $k\in\mathbb{N}$. Consider the vector valued multiplier Cauchy space $CM^\infty\big(\sum_k T_k\big)$ and the vector valued multiplier Riesz-Cauchy space $CM_R^\infty\big(\sum_k T_k\big)$ which are associated to an OVS, and defined by
\begin{eqnarray}\label{cauchy}
CM^\infty\big(\sum_k T_k\big):=\left\{x=(x_k)\in\ell_\infty(X):\sum_kT_kx_k \textrm{ is a Cauchy series}\right\}
\end{eqnarray}
and
\begin{eqnarray}\label{rcauchy}
CM_R^\infty\big(\sum_k T_k\big):=\left\{x=(x_k)\in\ell_\infty(X):\sum_kT_kx_k \textrm{ is a Riesz-Cauchy series}\right\}.
\end{eqnarray}
\end{rem}
By using the spaces given in (\ref{cauchy}) and (\ref{rcauchy}), we have the following corollary as an analogue of corresponding result given in \cite[Remark 2.4]{a1}.
\begin{cor}\label{cmf} Let $X$ and $Y$ be normed spaces and $T_k\in B(X:Y)$ for all $k\in\mathbb{N}$. The following statements are equivalent:
\begin{enumerate}
\item[(i)] $\sum_k T_k$ is $c_0(X)$-multiplier convergent.
\item[(ii)] $CM^\infty\big(\sum_k T_k\big)$ is a Banach space.
\item[(iii)] $CM_R^\infty\big(\sum_k T_k\big)$ is a Banach space.
\end{enumerate}
\end{cor}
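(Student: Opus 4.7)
The plan is to mirror the proof of Theorem \ref{t1}, replacing ``$R$-convergent'' by ``Cauchy'' or ``Riesz-Cauchy'' throughout. I would establish $\mathrm{(i)} \Rightarrow \mathrm{(ii)}$ and $\mathrm{(i)} \Rightarrow \mathrm{(iii)}$ by a three-$\varepsilon$ argument controlled by the uniform bound from Lemma \ref{l1}(a), and then $\mathrm{(ii)} \Rightarrow \mathrm{(i)}$ and $\mathrm{(iii)} \Rightarrow \mathrm{(i)}$ by running the $\phi(X)$-section argument used in the second half of Theorem \ref{t1} together with the regular-matrix Orlicz--Pettis step from \cite{a4}. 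Implicit in making sense of completeness of these spaces is that $X$ and $Y$ are treated as complete, exactly as in Theorem \ref{t1}.

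For $\mathrm{(i)} \Rightarrow \mathrm{(ii)}$ I would invoke Lemma \ref{l1}(a) to obtain a constant $H>0$ with $\big\|\sum_{k=1}^{n} T_k x_k\big\| \leq H$ whenever $\|x_k\| \leq 1$, hence by the triangle inequality $\big\|\sum_{k=p}^{q} T_k x_k\big\| \leq 2H$ under the same constraint. Given a Cauchy sequence $(x^m)$ in $CM^\infty\big(\sum_k T_k\big)$, its limit $x^0 \in \ell_\infty(X)$ exists, and for fixed $m$ large enough the estimate $\big\|\sum_{k=p}^{q} T_k(x_k^m - x_k^0)\big\| \leq 2H\|x^m-x^0\|_\infty$ combined with the Cauchy-ness of $\sum_k T_k x_k^m$ in $Y$ yields $\big\|\sum_{k=p}^{q} T_k x_k^0\big\| < \varepsilon$ for $p,q$ large, showing $x^0 \in CM^\infty\big(\sum_k T_k\big)$. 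For $\mathrm{(i)} \Rightarrow \mathrm{(iii)}$ I would first apply Abel summation to rewrite the Riesz partial sum as
\[
\frac{1}{R_n}\sum_{k=1}^{n} r_k\sum_{j=1}^{k} T_j x_j \;=\; \sum_{j=1}^{n} \alpha_j^{(n)} T_j x_j, \qquad \alpha_j^{(n)} := \frac{R_n - R_{j-1}}{R_n} \in [0,1],
\]
so that $\|\alpha_j^{(n)} x_j\| \leq \|x\|_\infty$ and the $H$-bound from Lemma \ref{l1}(a) applies verbatim; the same three-$\varepsilon$ scheme then goes through with the Riesz partial sums in place of ordinary partial sums.

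For $\mathrm{(ii)} \Rightarrow \mathrm{(i)}$ and $\mathrm{(iii)} \Rightarrow \mathrm{(i)}$ I would take $x=(x_k) \in c_0(X)$ and consider its sections $x^{[n]} \in \phi(X) \subseteq CM^\infty\big(\sum_k T_k\big) \cap CM_R^\infty\big(\sum_k T_k\big)$. Since $\|x^{[n]}-x\|_\infty = \sup_{k>n}\|x_k\| \to 0$, the sequence $(x^{[n]})$ is Cauchy in the ambient $\ell_\infty(X)$-topology, so completeness of the respective multiplier space (together with its closure in $\ell_\infty(X)$) places $x$ itself in it. Thus $\sum_k T_k x_k$ is Cauchy, respectively Riesz-Cauchy, in the Banach space $Y$, hence convergent, respectively Riesz-convergent. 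Since $c_0(X)$ is monotone, the series $\sum_k T_k x_k$ is in fact subseries $R$-convergent, and the regular-matrix Orlicz--Pettis theorem of \cite[Theorem 4.1]{a4} (used identically at the end of the proof of Theorem \ref{t1}) upgrades this to norm subseries convergence, which gives $c_0(X)$-multiplier convergence.

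The main obstacle I anticipate is the Riesz case: one must carefully justify the Abel-summation manipulation that converts the double-sum in the definition of $R$-convergence into a single sum amenable to Lemma \ref{l1}(a), and then verify that the coefficients $\alpha_j^{(n)} \in [0,1]$ truly preserve the uniform $H$-bound (so that the estimate $\big\|\sigma_n(x^m)-\sigma_n(x^0)\big\| \leq H\|x^m-x^0\|_\infty$ holds with a constant independent of $n$). Once that estimate is in hand, the rest is parallel bookkeeping; the only other subtlety is the tacit assumption, as in Theorem \ref{t1}, that $X$ and $Y$ are complete so that Cauchy/Riesz-Cauchy series in $Y$ actually converge, which is needed to close the implications back to $\mathrm{(i)}$.
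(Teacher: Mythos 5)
The paper gives no proof of Corollary \ref{cmf}; it is stated as an analogue of \cite[Remark 2.4]{a1}, with the tacit understanding that the argument of Theorem \ref{t1} transfers to the Cauchy and Riesz--Cauchy multiplier spaces. Your proposal is precisely that transfer, and it is correct. The one genuinely technical point, the Abel-summation identity
$\frac{1}{R_n}\sum_{k=1}^{n} r_k\sum_{j=1}^{k} T_j x_j=\sum_{j=1}^{n}\frac{R_n-R_{j-1}}{R_n}\,T_j x_j$
with coefficients in $[0,1]$, is exactly the right device for pushing the uniform bound of Lemma \ref{l1}(a) onto the Riesz partial sums, and your resulting estimate $\|\sigma_n(x^m)-\sigma_n(x^0)\|\leq H\|x^m-x^0\|_\infty$ is cleaner than the corresponding (rather garbled) ``$\frac{3H}{\epsilon}\|\cdots\|\leq H$'' step in the paper's own proof of Theorem \ref{t1}. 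Your added completeness hypotheses are not merely a convenience but a necessary repair of the statement: completeness of $X$ is forced by (ii) or (iii) (the isometric copy $e^1\otimes X\subseteq\phi(X)$ sits inside each multiplier space, so a complete multiplier space yields a complete $X$) yet is not implied by (i) (take $T_k=0$), and completeness of $Y$ is needed to pass from ``$c_0(X)$-multiplier Cauchy'' back to ``$c_0(X)$-multiplier convergent'' in (ii)$\Rightarrow$(i) and (iii)$\Rightarrow$(i) (e.g.\ $X=\mathbb{R}$, $Y=\phi$ with the sup norm and $T_k(t)=tk^{-1}e^k$ gives $CM^\infty\big(\sum_kT_k\big)=\ell_\infty$ complete while (i) fails). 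Two trivial remarks: in (ii)$\Rightarrow$(i) and (iii)$\Rightarrow$(i) the parenthetical ``together with its closure in $\ell_\infty(X)$'' is redundant, since a complete subspace of a metric space is automatically closed; and in the Cauchy case (ii)$\Rightarrow$(i) the Orlicz--Pettis step from \cite{a4} is unnecessary, as Cauchy plus completeness of $Y$ already gives norm convergence, though it is needed for (iii)$\Rightarrow$(i) exactly as you use it.
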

 The following theorem is an analogue of Theorem 2.5 given in \cite{al2}, and characterizes the completeness of a normed space by the vector valued multiplier space $M^\infty_{R}\big(\sum_k T_k\big)$. Since the proof is similar to the case $M^\infty_C\big(\sum_k T_k\big)$ given in \cite{al2}, we omit details; (see also \cite[Corollary 1.8]{s1}).
\begin{thm}\label{y} Let $X$ and $Y$ be given any normed spaces such that $X$ is complete and $T_k\in B(X:Y)$ for all $k\in\mathbb{N}$. Then, the space $Y$ is also complete if and only if $M^\infty_{R}\big(\sum_k T_k\big)$ is complete for every $c_0(X)$-multiplier Cauchy series.
\end{thm}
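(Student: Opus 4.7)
The forward direction is essentially free. If $Y$ is a Banach space and $\sum_k T_k$ is $c_0(X)$-multiplier Cauchy, then for every $x\in c_0(X)$ the series $\sum_k T_k x_k$ is Cauchy in $Y$ and hence convergent, so $\sum_k T_k$ is $c_0(X)$-multiplier convergent and Theorem \ref{t1} immediately yields completeness of $M^\infty_{R}\big(\sum_k T_k\big)$. For the converse I would argue by contrapositive: assume $Y$ is not complete and produce a $c_0(X)$-multiplier Cauchy operator series whose associated Riesz-multiplier space fails to be complete.

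Fix a Cauchy sequence $(y_n)\subset Y$ with no limit in $Y$ and, after thinning to a subsequence, arrange $\|y_{n+1}-y_n\|<2^{-n}$. Setting $z_n:=y_{n+1}-y_n$ we have $\sum_n\|z_n\|<\infty$, while the partial sums $\sum_{k=1}^n z_k=y_{n+1}-y_1$ fail to converge in $Y$. By Hahn-Banach, choose $x_0\in X$ and $x_0^*\in X^*$ of norm one with $x_0^*(x_0)=1$, and define the rank-one operators $T_n(x):=x_0^*(x)z_n$, so that $T_n\in B(X:Y)$ with $\|T_n\|\leq\|z_n\|$. The crude estimate $\sum_k\|T_k x_k\|\leq(\sup_k\|x_k\|)\sum_k\|z_k\|<\infty$ holds for every $x\in\ell_\infty(X)$, in particular for every $x\in c_0(X)$, so $\sum_k T_k$ is $c_0(X)$-multiplier Cauchy. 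Under the standing hypothesis, $M^\infty_{R}\big(\sum_k T_k\big)$ must therefore be complete.

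At this point I would replay the implication (ii)$\Rightarrow$(i) from the proof of Theorem \ref{t1}: completeness of $M^\infty_{R}\big(\sum_k T_k\big)$, together with $\phi(X)\subset M^\infty_{R}\big(\sum_k T_k\big)$ and the density of $\phi(X)$ in $c_0(X)$, forces $c_0(X)\subseteq M^\infty_{R}\big(\sum_k T_k\big)$; the monotonicity of $c_0(X)$ promotes the resulting $R$-convergence to weakly subseries $R$-convergence; and the Orlicz-Pettis theorem for regular matrices \cite[Theorem 4.1]{a4} upgrades this to subseries norm convergence, that is, $\sum_k T_k$ is $\ell_\infty(X)$-multiplier convergent. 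Specialising to the constant multiplier $(x_0,x_0,\ldots)\in\ell_\infty(X)$ then yields convergence of $\sum_k T_k x_0=\sum_k z_k$ in $Y$, and consequently convergence of $(y_n)$ in $Y$, contradicting the choice of $(y_n)$.

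The delicate point is this final deduction. The statement of Theorem \ref{t1} only produces $c_0(X)$-multiplier convergence, which does \emph{not} apply to the constant sequence $(x_0,x_0,\ldots)$, and for a fixed $\alpha\in c_0$ it is by no means clear that $\sum_k\alpha_k z_k$ must escape $Y$. One therefore has to look inside the proof of Theorem \ref{t1} to harvest the stronger $\ell_\infty(X)$-multiplier convergence that emerges from the regular-matrix Orlicz-Pettis step; this is legitimate precisely because the result in \cite{a4} is formulated for arbitrary normed spaces, so the fact that $Y$ is not yet known to be complete does not obstruct the chain of implications.
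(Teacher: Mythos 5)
Your forward direction is fine, but the converse has a genuine gap at exactly the point you flag as ``delicate.'' From completeness of $M^\infty_{R}\big(\sum_k T_k\big)$ you can legitimately conclude that $\sum_k T_kx_k$ is subseries norm convergent for every $x\in c_0(X)$; but the promotion of this to $\ell_\infty(X)$-multiplier convergence is precisely the step that needs sequential completeness of $Y$ (as the paper itself records in the introduction: subseries convergence gives only $\ell_\infty$-multiplier \emph{Cauchy}ness in a general normed space). Since your argument runs under the hypothesis that $Y$ is \emph{not} complete, you cannot harvest $\ell_\infty(X)$-multiplier convergence, and hence cannot feed in the constant multiplier $(x_0,x_0,\ldots)$, which lies outside the sup-norm closure of $\phi(X)$. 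The failure is not hypothetical: take $Y=\{b\in\ell_1:\lim_k2^kb_k=0\}$ with the $\ell_1$-norm and $z_k=2^{-k}e_k$. Then $(y_n)=\big(\sum_{k=1}^nz_k\big)$ is Cauchy without limit in $Y$, yet for every null scalar sequence $(a_k)$ the sum $\sum_ka_kz_k$ lands back in $Y$; consequently your series $T_k=x_0^*(\cdot)z_k$ is $c_0(X)$-multiplier (even subseries) convergent, $M^\infty_{R}\big(\sum_kT_k\big)=\{x\in\ell_\infty(X):x_0^*(x_k)\to0\}$ is closed in $\ell_\infty(X)$ and therefore complete, and still $\sum_kz_k$ diverges in $Y$. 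So your chosen series simply does not witness incompleteness of the multiplier space, and no contradiction can be extracted from it.

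The proof the paper points to (the $M^\infty_C$ case in \cite{al2}, cf.\ \cite[Corollary 1.8]{s1}) avoids this by a rescaling-and-closure argument that never leaves $c_0(X)$. Thin the Cauchy sequence so that $\|y_{n+1}-y_n\|\leq4^{-n}$, put $z_n=y_{n+1}-y_n$ and $T_n=x_0^*(\cdot)\,2^nz_n$, so that $\sum_n\|T_n\|<\infty$ and $\sum_nT_n$ is $c_0(X)$-multiplier Cauchy. By hypothesis $M^\infty_{R}\big(\sum_kT_k\big)$ is complete, hence closed in $\ell_\infty(X)$, and the finite sections $\sum_{k=1}^n2^{-k}e^k\otimes x_0\in\phi(X)$ converge in sup norm to $u=(2^{-k}x_0)_k$; therefore $u\in M^\infty_{R}\big(\sum_kT_k\big)$, i.e.\ $\sum_kT_k(2^{-k}x_0)=\sum_kz_k$ is $R$-convergent. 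Finally, since the partial sums $s_n=y_{n+1}-y_1$ are Cauchy and $R_n\to\infty$, one has $\big\|\frac{1}{R_n}\sum_{k=1}^nr_ks_k-s_n\big\|\to0$, so $R$-convergence of $\sum_kz_k$ forces norm convergence of $(y_n)$. You should replace your appeal to the interior of Theorem \ref{t1} with this rescaling argument; the rest of your write-up (the construction of $(z_n)$, the rank-one operators, and the forward implication) can stay as is.
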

In the following theorem, we give a new characterization of continuity of the certain summing operator by using Riesz summability method and $c_0(X)$-multiplier Cauchy series.
\begin{thm}\label{t3} Let $X$ and $Y$ be any normed spaces and $T_k\in B(X:Y)$ for all $k\in\mathbb{N}$. Then, the summing  operator $\mathcal{S}$ defined by
\begin{eqnarray}\label{con}
\begin{array}{ccccl}
\mathcal{S}& : &M^\infty_{R}\big(\sum_k T_k\big)&\longrightarrow &Y \\
          &    & x=(x_k)&\longmapsto &\mathcal{S}x=R-\sum_kT_kx_{k}
\end{array}
\end{eqnarray}
is continuous if and only if the series $\sum_{k}T_{k}$ is $c_0(X)$-multiplier Cauchy.
\end{thm}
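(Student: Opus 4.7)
The plan is to establish the biconditional by two separate arguments: the sufficiency is a bounded-Riesz-mean computation that recycles the opening step of the proof of Theorem~\ref{t1}, while the necessity is a contrapositive gliding-hump contradiction that exploits regularity of the Riesz method on finitely supported sequences.

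For sufficiency, assume $\sum_k T_k$ is $c_0(X)$-multiplier Cauchy. Exactly as in the opening of the proof of Theorem~\ref{t1}, I would invoke the operator-series analogue of Lemma~\ref{l1}(a) to produce $H>0$ with
\[\Bigl\|\sum_{k=1}^n T_k u_k\Bigr\|\leq H\quad\text{whenever }\|u_k\|\leq 1\text{ for all }k\leq n.\]
Given $x\in M^\infty_R(\sum_k T_k)$ with $\|x\|_\infty\leq 1$ and setting $s_k=\sum_{j=1}^k T_j x_j$, this bound yields $\|s_k\|\leq H$ for every $k$. Since the weights $r_k/R_n$ are nonnegative and sum to $1$, the Riesz means $\tfrac{1}{R_n}\sum_{k=1}^n r_k s_k$ are bounded in norm by $H$, so passing to the limit gives $\|\mathcal{S}x\|\leq H$ and $\mathcal{S}$ is continuous.

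For necessity, I would argue by contrapositive. If $\sum_k T_k$ fails to be $c_0(X)$-multiplier Cauchy, there exist $(x_k)\in c_0(X)$, $\epsilon>0$, and strictly increasing indices $p_n\leq q_n<p_{n+1}$ with $\|\sum_{k=p_n}^{q_n} T_k x_k\|\geq \epsilon$ for all $n$. Define finitely supported sequences $y^{(n)}$ by $y^{(n)}_k=x_k$ for $p_n\leq k\leq q_n$ and $y^{(n)}_k=0$ otherwise; then $y^{(n)}\in\phi(X)\subseteq M^\infty_R(\sum_k T_k)$. Because the partial sums of $\sum_k T_k y^{(n)}_k$ are eventually constant, regularity of the Riesz method under (\ref{rieszc}) forces $\mathcal{S}y^{(n)}=\sum_{k=p_n}^{q_n} T_k x_k$, so $\|\mathcal{S}y^{(n)}\|\geq\epsilon$. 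On the other hand, $\|y^{(n)}\|_\infty\leq\sup_{k\geq p_n}\|x_k\|\to 0$ because $(x_k)\in c_0(X)$, contradicting the continuity of $\mathcal{S}$.

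The main obstacle is the necessity direction: one must exhibit a sequence in $M^\infty_R(\sum_k T_k)$ that is norm-null in $\ell_\infty(X)$ but whose $\mathcal{S}$-images stay bounded away from zero. Keeping the witnesses finitely supported is what makes this work, because then $\mathcal{S}$ reduces to the genuine finite sum by regularity, while the $c_0(X)$ hypothesis on the failure witness forces the sup-norms of the blocks to vanish; the sufficiency, by contrast, is essentially a triangle-inequality calculation modulo the uniform boundedness lemma already used in Theorem~\ref{t1}.
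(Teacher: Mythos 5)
Your proof is correct, and while the sufficiency direction coincides with the paper's, your necessity direction takes a genuinely different route. The paper handles necessity by observing that continuity of $\mathcal{S}$ bounds the set $G=\bigl\{\bigl\|\sum_{k=1}^{n}T_kx_k\bigr\|:\|x_k\|\leq 1\bigr\}$ by $\|\mathcal{S}\|$ (evaluating $\mathcal{S}$ on finitely supported sequences, where regularity reduces the Riesz limit to the finite sum), and then invokes the cited characterization that boundedness of $G$ is equivalent to the series being $c_0(X)$-multiplier Cauchy --- the same external lemma it uses for sufficiency. You instead argue by contrapositive with an explicit block construction: a failure witness $(x_k)\in c_0(X)$ yields finitely supported sequences $y^{(n)}$ that are norm-null in $\ell_\infty(X)$ yet satisfy $\|\mathcal{S}y^{(n)}\|\geq\epsilon$, again using regularity to identify $\mathcal{S}y^{(n)}$ with the finite block sum. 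Your version is more self-contained, since it needs the uniform-boundedness lemma only for sufficiency and makes explicit where the $c_0$ hypothesis (as opposed to mere boundedness of the multiplier) enters; the paper's version is shorter and keeps both directions symmetric around the single set $G$. Both arguments rest on the same two pillars --- regularity of the Riesz method on eventually constant partial sums, and the convex-combination bound $\bigl\|\frac{1}{R_n}\sum_{k=1}^{n}r_ks_k\bigr\|\leq\sup_k\|s_k\|$ --- which you spell out more carefully than the paper does.
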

\begin{proof} Let us suppose that $ \mathcal{S}$ is continuous and define the set $G$ by
\begin{eqnarray}\label{eq213}
G:=\left\{\left\|\sum_{k=1}^nT_kx_k\right\|:\|x_k\|\leq 1~\textrm{ for all }~k\in\{1,2,\ldots,n\}\right\}.
\end{eqnarray}
Since the inclusion $\phi(X)\subset M^\infty_{R}(\sum_{k}T_{k})$ holds, the series $\sum_{k}T_{k}$ is $c_0(X)$-multiplier Cauchy from  the inequality $H=\sup_{n\in\mathbb{N}}G\leq \|\mathcal{S}\|$.

Conversely, let us suppose that $\sum_{k}T_{k}$ is $c_0(X)$-multiplier Cauchy series. Therefore, the set $G$ defined by (\ref{eq213}) is bounded (see \cite[Theorem 1.3]{s1}) and so, $H=\sup_{n\in\mathbb{N}}G$.
If $x=(x_{k})\in M^\infty_{R}\big(\sum_k T_k\big)$, then the proof follows from the inequality
\begin{eqnarray*}
\|\mathcal{S}x\|=\left\|R-\sum_k T_{k}x_{k}\right\|\leq H\|x\|.
\end{eqnarray*}
\end{proof}
Combining Theorem \ref{t3} with the results due to Altay and Kama \cite{al2} and Karaku\c s and Ba\c sar \cite{fbmk2}; we have the following:
\begin{cor}\label{c5}
Let $X$, $Y$ be any normed spaces and $T_k\in B(X:Y)$ for all $k\in\mathbb{N}$. Then, the following statements are equivalent:
\begin{enumerate}
\item[(i)]  The series $\sum_k T_k$ is $c_0(X)$-multiplier Cauchy.
\item[(ii)]  $\mathcal{S}:M_f^\infty\big(\sum_k T_k\big)\to Y$ is continuous.
\item[(iii)] $\mathcal{S}:M^\infty_C\big(\sum_k T_k\big)\to Y$ is continuous.
\item[(iv)] $\mathcal{S}:M^\infty_{R}\big(\sum_k T_k\big)\to Y$ is continuous.
    \end{enumerate}
\end{cor}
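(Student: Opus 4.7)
The plan is to reduce the four-way equivalence to three parallel biconditionals, each of which is already available in the literature: the equivalence (i) $\Leftrightarrow$ (iv) is precisely Theorem \ref{t3} established above; the equivalence (i) $\Leftrightarrow$ (iii) is the Ces\`{a}ro analogue proved by Altay and Kama in \cite{al2}; and the equivalence (i) $\Leftrightarrow$ (ii) is the almost-convergence analogue proved by Karaku\c s and Ba\c sar in \cite{fbmk2}. Chaining these three biconditionals immediately produces the corollary, so no fresh argument is strictly required.

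It may nevertheless be useful to record why the three parallel proofs are structurally identical, in case one prefers a self-contained derivation. For each of the spaces $\mathcal{V}\in\{M_f^\infty\big(\sum_k T_k\big),\, M_C^\infty\big(\sum_k T_k\big),\, M_R^\infty\big(\sum_k T_k\big)\}$, one has the inclusion $\phi(X)\subset\mathcal{V}\subseteq\ell_\infty(X)$ with the sup norm. For the forward direction, testing $\mathcal{S}$ against finitely supported sequences with coordinates of norm at most $1$ shows that the set $G$ defined in (\ref{eq213}) satisfies $\sup G\leq\|\mathcal{S}\|$, which yields the $c_0(X)$-multiplier Cauchy property of $\sum_{k}T_{k}$. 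For the converse, the $c_0(X)$-multiplier Cauchy hypothesis makes $G$ bounded by some $H<\infty$ (cf. \cite[Theorem 1.3]{s1}); hence $\big\|\sum_{k=1}^n T_k x_k\big\|\leq H\|x\|$ for every $x\in\mathcal{V}$, and passing to the Riesz, Ces\`{a}ro, or almost limit of these partial sums preserves the inequality and gives $\|\mathcal{S}x\|\leq H\|x\|$.

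I do not anticipate a genuine obstacle. The only minor point that deserves explicit verification is that each summability method preserves the uniform bound $H\|x\|$ in the limit: for the Riesz and Ces\`{a}ro methods this is immediate because the defining transforms are convex averages of the partial sums, while for almost convergence it follows from the fact that every Banach limit has norm one on $\ell_\infty$. With these observations in place, the corollary is an immediate consequence of Theorem \ref{t3} together with the two cited results.
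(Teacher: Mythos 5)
Your proposal is correct and coincides with the paper's own argument: the corollary is obtained by combining Theorem \ref{t3} (which gives (i)$\Leftrightarrow$(iv)) with the Ces\`{a}ro analogue from \cite{al2} and the almost-convergence analogue from \cite{fbmk2}. Your additional remarks on why the three parallel proofs share the same structure are a harmless elaboration of what the paper leaves implicit.
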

Now, we give the characterization of the (weakly) compactness for the summing operator $\mathcal{S}$ by using Riesz summability method and  $\ell_\infty(X)$-multiplier convergent series. Let us note that  $X$ need not to be complete.
\begin{thm}\label{lt} Let $X$ be any normed space, $Y$ be a Banach space and $T_k\in B(X:Y)$ for all $k\in\mathbb{N}$. Then the series $\sum_{k}T_{k}$ is $\ell_\infty(X)$-multiplier convergent if and only if the summing  operator $\mathcal{S}$ defined by (\ref{con}) is compact (weakly compact).
\end{thm}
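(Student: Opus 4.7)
The plan is to prove both directions of the equivalence by exploiting the regularity of the Riesz method — which forces $\ell_\infty(X)$-multiplier convergence of $\sum_k T_k$ to collapse $M^\infty_R\bigl(\sum_k T_k\bigr)$ onto $\ell_\infty(X)$ and $\mathcal{S}$ onto the ordinary summing operator — together with a Diestel-type compactness argument adapted to operator-valued series.

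For the forward direction, I would assume $\sum_k T_k$ is $\ell_\infty(X)$-multiplier convergent. Regularity of the Riesz method together with the inclusion (\ref{inc}) gives $M^\infty_R\bigl(\sum_k T_k\bigr) = \ell_\infty(X)$ and $\mathcal{S}(x) = \sum_k T_k x_k$ in the norm of $Y$. The central step is the uniform Cauchy estimate
\[
\lim_{N \to \infty} \sup\Bigl\{\, \Bigl\| \sum_{k=N}^n T_k x_k \Bigr\| : n \geq N,\ \|x\|_\infty \leq 1 \Bigr\} = 0,
\]
proved by a gliding-hump contradiction: if the tail supremum stayed bounded away from $0$, one could select approximate witnesses on disjoint blocks $[N_j, n_j]$ and concatenate them into a single bounded multiplier $z \in B_{\ell_\infty(X)}$ whose image partial sums $\sum_k T_k z_k$ are not Cauchy in $Y$, contradicting the hypothesis. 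The estimate yields convergence of the partial-sum maps $\mathcal{S}_N(x) := \sum_{k=1}^N T_k x_k$ to $\mathcal{S}$ in operator norm, and compactness of $\mathcal{S}$ follows in the style of the classical Diestel characterization of unconditional convergence; weak compactness is then automatic.

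For the converse, I would assume $\mathcal{S}$ is weakly compact (the weaker of the two hypotheses). Fix $x \in \ell_\infty(X)$ with $\|x\|_\infty \leq 1$ and $A \subseteq \mathbb{N}$. Each truncation $x \cdot \chi_{A \cap [1,n]}$ lies in $\phi(X) \subseteq M^\infty_R\bigl(\sum_k T_k\bigr)$ with norm at most $1$, so the partial sums
\[
\sum_{k \in A,\, k \leq n} T_k x_k = \mathcal{S}\bigl(x \cdot \chi_{A \cap [1,n]}\bigr)
\]
form a relatively weakly compact subset of $Y$. Passing to a weakly convergent subsequence gives weak subseries convergence of $\sum_k T_k x_k$, and the regular-matrix version of the Orlicz--Pettis theorem from \cite{a4} (exactly as deployed in the proof of Theorem \ref{t1}) upgrades weak subseries convergence to norm subseries convergence. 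Since $A$ and $x \in B_{\ell_\infty(X)}$ were arbitrary, $\sum_k T_k$ is $\ell_\infty(X)$-multiplier convergent.

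The main obstacle is the gliding-hump argument in the forward direction: the blocks $[N_j, n_j]$ must be chosen with enough separation that their concatenation is a genuine element of $\ell_\infty(X)$, and one has to control the summing contributions of the gaps between blocks so that the failure of Cauchyness on the blocks is not washed out by cancellation elsewhere. The subsequent step from operator-norm approximability of $\mathcal{S}$ by $\mathcal{S}_N$ to compactness then rides on the structural compactness of the $\mathcal{S}_N$, which is the cleanest place where the operator-valued setting differs from the classical scalar-coefficient one.
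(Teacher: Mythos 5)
Your overall architecture matches the paper's. For the converse you test $\mathcal{S}$ on finitely supported sequences of norm at most one and invoke an Orlicz--Pettis upgrade; the paper does the same with the set $H$ of finite sections, using relative norm compactness of $\mathcal{S}(H)$ and the regular-matrix Orlicz--Pettis theorem of \cite{a4}. Your weak-compactness variant via a weakly convergent subsequence is workable, but note that a weakly convergent \emph{subsequence} of the partial sums $\sum_{k\in A,\,k\le n}T_kx_k$ is not yet weak subseries convergence: you must first observe that these partial sums are weakly Cauchy (which follows from the boundedness of $\mathcal{S}$ on $\phi(X)$ together with Lemma \ref{l1}) so that the subsequential weak limit is the limit of the whole sequence. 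For the forward direction the paper likewise approximates $\mathcal{S}$ in operator norm by the sectional operators $\mathcal{S}^R_n$, citing Swartz for the uniform convergence of $\sum_kT_kx_k$ over $\|x_k\|\le 1$; your gliding-hump derivation of that uniform tail estimate is correct and self-contained, and is a legitimate replacement for the citation.

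The genuine gap is exactly the point you defer at the end: the ``structural compactness of the $\mathcal{S}_N$.'' In the operator-valued setting $\mathcal{S}_Nx=\sum_{k=1}^NT_k x_k$ is a finite sum of the operators $T_k\circ\pi_k$ (with $\pi_k$ the coordinate projection), and such an operator is compact only when each $T_k$, $k\le N$, is compact; nothing in the hypotheses supplies this, so operator-norm approximation by the $\mathcal{S}_N$ does not deliver compactness of $\mathcal{S}$. The obstruction cannot be argued away: take $X=Y$ infinite dimensional, $T_1$ the identity and $T_k=0$ for $k\ge 2$. Then $\sum_kT_k$ is trivially $\ell_\infty(X)$-multiplier convergent, $M^\infty_{R}\big(\sum_k T_k\big)=\ell_\infty(X)$, and $\mathcal{S}x=x_1$ maps the unit ball onto $B_X$, which is not relatively compact (nor relatively weakly compact when $X$ is non-reflexive). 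So this implication needs compactness (resp.\ weak compactness) of the individual $T_k$ as an additional hypothesis. Be aware that the paper's own proof passes over the identical step in silence (``it is sufficient to prove that $\|\mathcal{S}^R_n-\mathcal{S}\|\to 0$''), so your proposal reproduces rather than introduces the defect---but as written it does not establish the stated direction.
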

\begin{proof}
Let us suppose that $\mathcal{S}$ is compact. If $x=(x_k)\in\ell_\infty(X)$, then the set
\begin{eqnarray*}
H:=\left\{\sum_{k\in\mathcal{F}}e^k\otimes x_k|\mathcal{F} \textrm{ finite and }\|x_k\|\leq1\right\}\subset M^\infty_{R}\big(\sum_k T_k\big)
\end{eqnarray*}
is bounded. By the hypothesis,
\begin{eqnarray*}
\mathcal{S}(H):=\left\{R-\sum_{k\in\mathcal{F}}T_k x_k|\mathcal{F} \textrm{ finite and }\|x_k\|\leq1\right\}
\end{eqnarray*}
is relatively compact. Therefore, the series $\sum_kT_kx_k$ is subseries norm $R$-convergent, and so is weakly subseries $R$-convergent \cite[Theorem 2.48]{s2}. Further, by a consequence of Orlicz-Pettis theorem for regular matrices \cite{a4}, the series  $\sum_kT_kx_k$ is  subseries norm convergent; that is the series $\sum_k T_k$ is  $\ell_\infty(X)$-multiplier convergent.

Conversely, suppose that the series $\sum_k T_k$ is $\ell_\infty(X)$-multiplier convergent. We define the operators $\mathcal S^R_n$ by
\begin{eqnarray*}
\begin{array}{ccccl}
\mathcal S^R_n& : &M^\infty_{R}\big(\sum_k T_k\big)&\longrightarrow &Y \\
          &    & x=(x_k)&\longmapsto &\mathcal{S}^R_n(x)=R-\sum_{k=1}^nT_kx_{k}
\end{array}
\end{eqnarray*}
for all $n\in\mathbb{N}$. It is sufficient to prove that $\|\mathcal S^R_n-\mathcal S\|\to0$, as $n\to\infty$. Since  $\sum_k T_k$ is  $\ell_\infty(X)$-multiplier convergent, then the series $\sum_kT_kx_k$ is uniformly $R$-convergent for $\|x_k\|\leq1$ \cite[Corollary 11.11]{s2}. Therefore,
\begin{eqnarray*}
\lim_{n\to\infty}\left\|\mathcal S^R_n-\mathcal S\right\|&=&\lim_{n\to\infty}\left\|\left(R-\sum_{k=1}^nT_kx_k\right)-
\left(R-\sum_{k} T_kx_k\right)\right\|\\
&=&\lim_{n\to\infty}\left\|R-\sum_{k=n+1}^\infty T_kx_k\right\|=0
\end{eqnarray*}
with $\|x_k\|\leq1$. This step completes the proof.
\end{proof}
Combining the related results in \cite{al2} and  \cite{fbmk2} with Theorem \ref{lt}, we have the following:
\begin{cor}\label{c6}Let $X$ be any normed space, $Y$ be a Banach space and $T_k\in B(X:Y)$ for all $k\in\mathbb{N}$.
Then, the following statements are equivalent:
\begin{enumerate}
 \item[(i)] The series $\sum_k T_k$ is $\ell_\infty(X)$-multiplier convergent.
\item[(ii)]  $\mathcal{S}:M_f^\infty(\sum_k T_k)\to Y$ is compact (weakly compact).
\item[(iii)]  $\mathcal{S}:M_{C}^\infty(\sum_kT_k)\to Y$ is compact (weakly compact).
\item[(iv)] $\mathcal{S}:M^\infty_{R}\big(\sum_k T_k\big)\to Y$ is compact (weakly compact).
    \end{enumerate}
\end{cor}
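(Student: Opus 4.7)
The plan is to use statement (i) as a hub and establish (i) $\Leftrightarrow$ (iv), (i) $\Leftrightarrow$ (iii), and (i) $\Leftrightarrow$ (ii) separately; transitivity then yields the full chain of equivalences among all four statements.

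The equivalence (i) $\Leftrightarrow$ (iv) is exactly the content of Theorem \ref{lt}, and its proof already covers both the compact and weakly compact cases simultaneously: the "only if" direction uses relative compactness of $\mathcal{S}(H)$ (which in particular is relative weak compactness) together with the Orlicz-Pettis theorem for regular matrices \cite{a4} to obtain $\ell_\infty(X)$-multiplier convergence, while the "if" direction exhibits $\mathcal{S}$ as the norm limit of the finite-rank operators $\mathcal{S}^R_n$ by means of the uniform $R$-convergence of $\sum_k T_k x_k$ on $\{(x_k): \|x_k\|\leq 1\}$ via \cite[Corollary 11.11]{s2}. For (i) $\Leftrightarrow$ (iii) I would invoke the Ces\`aro analogue due to Altay and Kama \cite{al2}, and for (i) $\Leftrightarrow$ (ii) the almost-convergence analogue of Karaku\c s and Ba\c sar \cite{fbmk2}. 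In each case the proof follows the same template as in Theorem \ref{lt}, only with the corresponding $C$-, $f$-, or $R$-truncations of $\mathcal{S}$ and the same Orlicz-Pettis maneuver in the reverse direction.

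Hence the proof reduces to a short paragraph citing Theorem \ref{lt} for (i) $\Leftrightarrow$ (iv), then \cite{al2} and \cite{fbmk2} for (i) $\Leftrightarrow$ (iii) and (i) $\Leftrightarrow$ (ii), respectively, and concluding by transitivity.

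The main (mild) obstacle I anticipate is ensuring that the "compact" and "weakly compact" versions are treated uniformly across the three summability methods. If either of \cite{al2} or \cite{fbmk2} states only the norm-compact version, one has to supply the weakly compact half by rerunning the Theorem \ref{lt} argument verbatim in the weak setting: relative weak compactness of $\mathcal{S}(H)$ is enough for the Orlicz-Pettis step, and the finite-rank approximation giving norm compactness also gives weak compactness automatically. No new idea is needed, so this is a bookkeeping issue rather than a substantive one.
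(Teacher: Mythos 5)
Your proposal matches the paper's own derivation: the corollary is obtained by combining Theorem \ref{lt} (which gives (i) $\Leftrightarrow$ (iv) in both the compact and weakly compact settings) with the Ces\`aro-summability analogue from \cite{al2} for (iii) and the almost-summability analogue from \cite{fbmk2} for (ii), exactly as you describe. Your closing remark about possibly rerunning the Theorem \ref{lt} argument for the weakly compact case is a reasonable precaution but is not needed beyond what the cited sources already supply.
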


Now, we may introduce the multiplier space of weak $R$-convergence associated to the series $\sum_k T_k$ and obtain the corresponding results similar to the previous theorems and corollaries.
\begin{definition}
Let $X$ and $Y$ be normed spaces, and $T_k\in B(X:Y)$ for all $k\in\mathbb{N}$. The vector valued multiplier space $M^\infty_{wR}\big(\sum_k T_k\big)$ of weakly $R$-convergence associated to the series $\sum_k T_k$ is defined by
\begin{eqnarray*}
M^\infty_{wR}\big(\sum_k T_k\big):=\left\{x=(x_k)\in\ell_\infty(X):\sum_k T_k x_{k} \textrm{ is $wR$-convergent}\right\}
\end{eqnarray*}
and endowed with the sup norm.
\end{definition}
Since the inclusion $M^\infty_{R}\big(\sum_k T_k\big)\subseteq M^\infty_{wR}(\sum_k T_k)$ clearly holds, we have the following inclusions which are similar to the relation in (\ref{inc}):
  \begin{eqnarray}\label{inc1}\phi(X) \subseteq M^\infty_{R}\big(\sum_k T_k\big)\subseteq M^\infty_{wR}\big(\sum_k T_k\big)\subseteq \ell_\infty(X).
  \end{eqnarray}
 Since the completeness of the multiplier space $M_{wR}^\infty\big(\sum_{k}x_{k}\big)$ is given by means of $c_0(X)$-multiplier convergent series by using the similar technique for proving the completeness of $M_{R}^\infty\big(\sum_{k}x_{k}\big)$, we omit the proof of following theorem.
  \begin{thm}\label{t11} Let $X$ and $Y$ be any given Banach spaces, and $T_k\in B(X:Y)$ for all $k\in\mathbb{N}$. Then, the series $\sum_k T_k$ is $c_0(X)$-multiplier convergent if and only if
 $M^\infty_{wR}\big(\sum_k T_k\big)$ is a Banach space.
\end{thm}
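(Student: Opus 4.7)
The plan is to follow the proof of Theorem \ref{t1} closely, since both implications carry over with natural modifications when $R$-convergence is replaced by $wR$-convergence. The key observation that makes these modifications essentially mechanical is that the partial-sum bound furnished by Lemma \ref{l1}(a) is a norm bound, which automatically dominates $|y^*(\cdot)|$ for every $y^*\in Y^*$ with $\|y^*\|\leq 1$; hence the norm-level estimates used in Theorem \ref{t1} transfer to weak-level estimates uniformly over the unit ball of $Y^*$.

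For the direction (ii)$\Rightarrow$(i) I would argue as follows. Completeness of $M^\infty_{wR}\big(\sum_k T_k\big)$ together with its containment in $\ell_\infty(X)$ forces it to be closed in $\ell_\infty(X)$. Since $\phi(X)\subseteq M^\infty_{wR}\big(\sum_k T_k\big)$ trivially and $c_0(X)=\overline{\phi(X)}$ in $\ell_\infty(X)$, the inclusion $c_0(X)\subseteq M^\infty_{wR}\big(\sum_k T_k\big)$ follows. Thus $\sum_k T_k x_k$ is $wR$-convergent for every $x\in c_0(X)$. Monotonicity of $c_0(X)$ then yields subseries $wR$-convergence, i.e.\ weak subseries $R$-convergence, and the Orlicz--Pettis theorem for regular matrices \cite[Theorem 4.1]{a4} lifts this to subseries norm convergence, which exactly says that $\sum_k T_k$ is $\ell_\infty(X)$-multiplier convergent and hence $c_0(X)$-multiplier convergent.

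For the direction (i)$\Rightarrow$(ii) I would start with a Cauchy sequence $(x^m)$ in $M^\infty_{wR}\big(\sum_k T_k\big)$; by completeness of $\ell_\infty(X)$ and the inclusions in (\ref{inc1}) it converges to some $x^0\in\ell_\infty(X)$. Let $y_m\in Y$ denote the $wR$-sum of $\sum_k T_k x_k^m$. I would first verify that $(y_m)$ is Cauchy in $Y$: after rescaling the entries of $x^m-x^\ell$ so that they have norm at most $1$, Lemma \ref{l1}(a) gives
\[
\left\|\frac{1}{R_n}\sum_{k=1}^n r_k\sum_{j=1}^k T_j(x_j^m-x_j^\ell)\right\|\leq H\,\|x^m-x^\ell\|_\infty
\]
uniformly in $n$; evaluating against $y^*\in Y^*$ with $\|y^*\|\leq 1$, letting $n\to\infty$, and taking the supremum over such $y^*$ produces $\|y_m-y_\ell\|\leq H\|x^m-x^\ell\|_\infty$. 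Completeness of $Y$ then yields $y_0\in Y$ with $y_m\to y_0$, and a standard three-epsilon split---estimating the deviation of the weak Riesz means of $\sum_k T_k x_k^0$ from $y^*(y_0)$ by inserting the $m$-th-row Riesz mean and $y^*(y_m)$---shows that these weak Riesz means converge to $y^*(y_0)$ for every $y^*\in Y^*$, i.e.\ $x^0\in M^\infty_{wR}\big(\sum_k T_k\big)$.

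The main obstacle, compared with Theorem \ref{t1}, lies in producing the candidate weak limit $y_0$: since only weak convergence of the Riesz partial sums of $\sum_k T_k x_k^0$ is at our disposal, one cannot invoke completeness of $Y$ on those partial sums directly. The workaround is to construct $y_0$ as the norm limit of the sequence $(y_m)$ of $wR$-sums of the approximants, and the step that makes this construction go through is precisely the uniform-in-$y^*$ domination by the partial-sum bound from Lemma \ref{l1}(a).
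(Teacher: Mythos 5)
Your proposal is correct and matches the paper's intent: the paper omits the proof of this theorem, stating only that it follows by the same technique as Theorem \ref{t1}, and your argument is precisely that adaptation (reusing the uniform bound $H$ from Lemma \ref{l1}(a), the closedness argument plus \cite[Theorem 4.1]{a4} for one direction, and the three-epsilon split with the auxiliary sequence $(y_m)$ for the other). Your explicit observation that the candidate limit $y_0$ must be obtained as the norm limit of the $wR$-sums $y_m$, rather than from the Riesz partial sums directly, is exactly the point where the weak case requires care, and you handle it correctly.
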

\begin{cor}
Let $X$ and $Y$ be Banach spaces, and $T_k\in B(X:Y)$ for all $k\in\mathbb{N}$. Then, the series $\sum_kT_k$ is $c_0(X)$-multiplier convergent if and only if the inclusion $c_0(X)\subseteq M_{wR}^\infty\big(\sum_k T_k\big)$ holds.
\end{cor}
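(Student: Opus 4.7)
My plan is to prove both implications by imitating the proof of Theorem \ref{t1}, relying on the observation that the argument (ii)$\Rightarrow$(i) there really only needed \emph{weak} subseries Riesz-convergence in order to invoke the Orlicz--Pettis theorem for regular matrices from \cite[Theorem 4.1]{a4}. This observation will let the same reasoning carry over verbatim with the larger space $M_{wR}^\infty\big(\sum_k T_k\big)$ replacing $M_R^\infty\big(\sum_k T_k\big)$.

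For the forward direction, I would start from the assumption that $\sum_k T_k$ is $c_0(X)$-multiplier convergent. For each $x=(x_k)\in c_0(X)$ the series $\sum_k T_k x_k$ then converges in norm, and regularity of the Riesz matrix under condition (\ref{rieszc}) implies that its partial sums are $R$-convergent to the same limit; hence $x\in M_R^\infty\big(\sum_k T_k\big)\subseteq M_{wR}^\infty\big(\sum_k T_k\big)$, which gives the desired inclusion.

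For the converse, I would assume $c_0(X)\subseteq M_{wR}^\infty\big(\sum_k T_k\big)$, fix $x=(x_k)\in c_0(X)$, and exploit the monotonicity of $c_0(X)$: for every $M\subseteq\mathbb{N}$ the truncation of $x$ obtained by zeroing out the coordinates outside $M$ still lies in $c_0(X)$, so by hypothesis $\sum_{k\in M} T_k x_k$ is $wR$-convergent in $Y$. This is exactly the statement that $\sum_k T_k x_k$ is weakly subseries $R$-convergent, and an appeal to the Orlicz--Pettis theorem for regular matrices \cite[Theorem 4.1]{a4} will upgrade this to subseries norm convergence; in particular $\sum_k T_k x_k$ converges in $Y$, so $\sum_k T_k$ is $c_0(X)$-multiplier convergent. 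I expect no serious obstacle here: the heavy lifting is already packaged in the Orlicz--Pettis result for regular matrices, and the corollary is essentially a sharpening of Corollary \ref{qw}, showing that merely weak Riesz-convergence on $c_0(X)$ is strong enough to force $c_0(X)$-multiplier convergence of the operator series.
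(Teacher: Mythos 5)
Your argument is correct and is essentially the route the paper itself takes: the nontrivial direction is exactly the chain used in the proof of Theorem \ref{t1} (ii)$\Rightarrow$(i) (monotonicity of $c_0(X)$ gives weak subseries $R$-convergence, then the Orlicz--Pettis theorem for regular matrices from \cite[Theorem 4.1]{a4} upgrades this to subseries norm convergence), entered one step later since the inclusion $c_0(X)\subseteq M_{wR}^\infty\big(\sum_k T_k\big)$ is now the hypothesis rather than a consequence of completeness. The forward direction via regularity of the Riesz matrix is likewise the intended one.
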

\begin{rem}
Let us suppose that $X$ and $Y$ are Banach spaces and $T_k\in B(X:Y)$ for all $k\in\mathbb{N}$. Then, the multiplier space $M_w^\infty\big(\sum_k T_k\big)$ is introduced in \cite{s1} as
 \begin{eqnarray*}
M_w^\infty\big(\sum_k T_k\big):=\left\{x=(x_k)\in\ell_\infty(X): \sum_kT_kx_k \textrm{ is weakly convergent}\right\}.
\end{eqnarray*} Now, if the series $\sum_kT_k$ is a $c_0(X)$-multiplier convergent,
 then the series $\sum_ky^*(T_kx_k)$ is convergent for all $x=(x_k)\in c_0(X)$ and for all $y^*\in Y^*$, that is, the series is weakly convergent. It is known by Corollary \ref{qw} that $x=(x_k)\in M_R^\infty\big(\sum_k T_k\big)$, and so $x=(x_k)\in M_{wR}^\infty\big(\sum_k T_k\big)$. This means that there exists $y_0\in Y$ with $wR-\sum_k T_kx_k=y_0$ such that
 \begin{eqnarray*}
\sum_ky^*(T_kx_k) =R-\sum_k y^*(T_kx_k)=y^*(y_0).
\end{eqnarray*}
Therefore, the inclusion $M_R^\infty\big(\sum_k T_k\big)\subseteq M_w^\infty(\sum_k T_k)$ holds. However, we have no an idea on the sufficient conditions for the reverse inclusion.
\end{rem}

By combining the previous results and Theorem \ref{t11}, we derive the following for the analogue of Corollary \ref{cor1} in the weak topology:
\begin{cor} Let $X$ and $Y$ be Banach spaces, and $T_k\in B(X:Y)$ for all $k\in\mathbb{N}$. Then, the following assertions are equivalent:
\begin{enumerate}
\item[(i)] The series $\sum_k T_k$ is $c_0(X)$-multiplier convergent.
\item[(ii)] $M^\infty_{wf}\big(\sum_k T_k\big)$ is a Banach space.
    \item[(iii)] $M^\infty_{wC}\big(\sum_k T_k\big)$ is a Banach space.
\item[(vi)] $M^\infty_{wR}\big(\sum_k T_k\big)$ is a Banach space.
\end{enumerate}
\end{cor}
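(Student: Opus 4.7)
The plan is essentially to assemble the corollary from already-established results. Theorem \ref{t11} gives the equivalence (i) $\Leftrightarrow$ (vi) immediately. The weak-topology analogues for almost convergence and Cesàro convergence, established respectively by Karaku\c s and Ba\c sar \cite{fbmk2} and by Altay and Kama \cite{al2}, furnish (i) $\Leftrightarrow$ (ii) and (i) $\Leftrightarrow$ (iii). Chaining the three equivalences through (i) then yields the claimed four-way equivalence, and this is the shortest presentation.

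If a more self-contained derivation is desired, the natural strategy is to mirror the argument of Theorem \ref{t1}, applied under each functional $y^* \in Y^*$ in turn. In the forward direction, (i) together with Part (a) of Lemma \ref{l1} supplies a uniform bound $H$ on $\|\sum_{k=1}^n T_k x_k\|$ for $\|x_k\|\le 1$. For a Cauchy sequence $(x^m)$ in whichever multiplier space is under consideration, completeness of $\ell_\infty(X)$ provides a candidate limit $x^0 \in \ell_\infty(X)$, and the three-term triangle-inequality estimate used in Theorem \ref{t1} -- transplanted verbatim inside $y^*$ -- verifies that $\sum_k T_k x_k^0$ is weakly almost/Ces\`aro/Riesz convergent. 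This uses only continuity of $y^*$ together with linearity of the (regular) summability matrix involved, so no new estimate is required.

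In the reverse direction, closedness of the relevant multiplier space in $\ell_\infty(X)$, together with the inclusion $\phi(X) \subseteq M^\infty_{wR}(\sum_k T_k)$ (and its almost and Ces\`aro analogues), forces $c_0(X) = \overline{\phi(X)}^{\,\ell_\infty(X)}$ into the multiplier space. Hence $\sum_k T_k x_k$ is weakly $f$-, $C$-, or $R$-convergent for every $x \in c_0(X)$, and a fortiori on every subseries via monotonicity of $c_0(X)$. The Orlicz--Pettis theorem for regular matrices \cite[Theorem 4.1]{a4} then promotes this weak subseries summability to norm subseries convergence, yielding $\ell_\infty(X)$-multiplier convergence and, in particular, (i).

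The only point requiring attention -- and the closest thing to an obstacle -- is confirming that \cite[Theorem 4.1]{a4} applies uniformly to the Riesz, Ces\`aro, and almost-convergence matrices in the weak setting; since all three are regular matrix methods, this is a formality rather than a genuine difficulty, and the entire corollary reduces to careful bookkeeping of three parallel arguments already present in the literature.
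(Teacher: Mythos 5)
Your first paragraph is exactly the paper's argument: the corollary is stated there without proof, being obtained by chaining Theorem \ref{t11} (for (i)$\Leftrightarrow$(vi)) with the weak-topology results of \cite{al2} and \cite{fbmk2} (for (i)$\Leftrightarrow$(iii) and (i)$\Leftrightarrow$(ii)). The only quibble concerns your optional self-contained sketch: almost convergence is not a regular matrix method, so \cite[Theorem 4.1]{a4} does not cover that case ``as a formality'' and one would instead need the almost-convergence Orlicz--Pettis result of \cite{a9} or \cite{fbmk2}; this does not affect your main route, which coincides with the paper's.
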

Following theorem is the analogue of Theorem 3.3 of \cite{al2}. Since the proof is similar to the case $M^\infty_{wC}\big(\sum_k T_k\big)$, we omit details.
\begin{thm}\label{wy}
Let $X$ be a Banach space, $Y$ be any normed space and $T_k\in B(X:Y)$ for all $k\in\mathbb{N}$. Then, $Y$ is complete if and only if the multiplier space $M^\infty_{wR}\big(\sum_k T_k\big)$ is complete for every $c_0(X)$-multiplier Cauchy series.
\end{thm}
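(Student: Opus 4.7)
The plan is to split the biconditional into its two implications. The forward direction mirrors Theorem \ref{t1} with weak convergence replacing norm convergence of Riesz means, and the reverse direction follows the strategy of Theorem 3.3 of \cite{al2} (the $M^\infty_{wC}$ analogue), as the authors' remark suggests.

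\emph{Forward direction.} Assume $Y$ is complete and $\sum_k T_k$ is $c_0(X)$-multiplier Cauchy. Take a Cauchy sequence $(x^m)$ in $M^\infty_{wR}\big(\sum_k T_k\big)$; by the inclusion (\ref{inc1}) and completeness of $\ell_\infty(X)$ it has a sup-norm limit $x^0 \in \ell_\infty(X)$. The $c_0(X)$-multiplier Cauchy property, through the uniform bound $H$ isolated in the proof of Theorem \ref{t3}, gives $\big\|\sum_{j\leq k}T_j a_j\big\| \leq H\|a\|_\infty$ for every bounded sequence $a$. Denoting by $y_m \in Y$ the $wR$-sum of $\sum_k T_k x_k^m$ and testing against $y^* \in Y^*$, one obtains $\|y_m - y_{m'}\| \leq H\|x^m - x^{m'}\|_\infty$, so $(y_m)$ is Cauchy in $Y$ and converges to some $y_0 \in Y$. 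A three-epsilon splitting of $|y^*(\sigma_n^{x^0}) - y^*(y_0)|$ through $\sigma_n^{x^m}$ and $y_m$, where $\sigma_n^x$ denotes the $n^{th}$ Riesz mean of the partial sums of $\sum_k T_k x_k$, completes this implication.

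\emph{Reverse direction.} Let $(y_n) \subset Y$ be Cauchy; pass to a subsequence with $\|y_{k+1} - y_k\| < 2^{-k}$, set $z_k := y_{k+1} - y_k$, and fix $x_0 \in X$, $x^* \in X^*$ with $\|x_0\| = 1 = x^*(x_0)$ via Hahn--Banach. Define $T_k(x) := x^*(x)z_k$. Absolute summability of $(\|z_k\|)$ makes $\sum_k T_k$ an $\ell_\infty(X)$-multiplier Cauchy (hence $c_0(X)$-multiplier Cauchy) operator series, so $M^\infty_{wR}\big(\sum_k T_k\big)$ is complete by hypothesis. Its closedness in $\ell_\infty(X)$, combined with $\phi(X) \subseteq M^\infty_{wR}\big(\sum_k T_k\big)$ and the sup-norm density of finitely supported sequences in $c_0(X)$, yields $c_0(X) \subseteq M^\infty_{wR}\big(\sum_k T_k\big)$; hence for every $\alpha \in c_0$ the series $\sum_k \alpha_k z_k$ is $wR$-convergent in $Y$. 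Because its partial sums are already norm-Cauchy, weak convergence of their Riesz means to a point of $Y$ upgrades to norm convergence, so $\sum_k z_k$ is $c_0$-multiplier convergent in $Y$. By Lemma \ref{l1}(b) this is a $wuC$ series, and the Orlicz--Pettis theorem for regular matrices \cite[Theorem 4.1]{a4}, applied in Riesz form as in the proof of Theorem \ref{t1}, promotes this to subseries norm convergence of $\sum_k z_k$, from which $(y_n)$ converges in $Y$.

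The main obstacle is precisely this last promotion in the reverse direction: in an arbitrary normed $Y$ a $wuC$ series need not converge, and the natural multiplier directly realizing $\sum_k z_k$ is the constant sequence $(x_0, x_0, \ldots)$, which lies outside $c_0(X)$ and so is not delivered by the sup-norm closedness argument. Bridging this gap requires the regular-matrix Orlicz--Pettis transfer in the Riesz setting, which converts the simultaneous $wR$-convergences furnished across the whole $c_0$-multiplier family, together with the absolute summability $\sum\|z_k\| < \infty$ supplying norm-Cauchy partial sums, into genuine norm convergence of the un-weighted series in $Y$. This delicate technical step exactly parallels the Ces\`{a}ro argument in \cite{al2} that the authors cite as the model.
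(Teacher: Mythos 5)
Your forward implication is sound and is essentially the weak-topology transcription of the proof of Theorem \ref{t1}: the uniform bound $H$ supplied by the $c_0(X)$-multiplier Cauchy hypothesis gives $\|y_m-y_{m'}\|\leq H\|x^m-x^{m'}\|_\infty$ for the $wR$-sums, completeness of $Y$ produces the candidate limit $y_0$, and the three-term estimate of $|y^*(\sigma_n^{x^0})-y^*(y_0)|$ closes that half. The problem is entirely in the reverse implication, and it is exactly the issue you flag in your last paragraph but do not resolve. From the completeness of $M^\infty_{wR}\big(\sum_kT_k\big)$ for your series $T_k(x)=x^*(x)z_k$ you extract only that $\sum_k\alpha_kz_k$ is $wR$-convergent in $Y$ for every scalar $\alpha\in c_0$; the multiplier you actually need, the constant sequence, is bounded but not null, and no Orlicz--Pettis theorem bridges this. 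The hypothesis of \cite[Theorem 4.1]{a4} is weak subseries $R$-convergence, i.e. $wR$-sums in $Y$ for the multipliers $\chi_M$, $M\subseteq\mathbb{N}$, which are again bounded rather than null multipliers; you never verify this, and the $c_0$-multiplier information you do have is genuinely insufficient in an incomplete space.

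Concretely: take $X=\mathbb{R}$, let $Y=\{(\alpha_k2^{-k})_{k}:\alpha\in c_0\}\subseteq\ell_1$ with the $\ell_1$-norm, and let $y_n=\sum_{k=1}^{n}2^{-k}e^{k}$, a norm-Cauchy sequence with no limit in $Y$. Your construction gives $z_k=y_{k+1}-y_k=2^{-(k+1)}e^{k+1}$ and $T_k(t)=tz_k$, and one checks that $M^\infty_{wR}\big(\sum_kT_k\big)=c_0$, which \emph{is} complete: for $\alpha\notin c_0$ the $\ell_1$-limit of the partial sums lies in the closure of $Y$ but outside $Y$, and Hahn--Banach rules out any other $wR$-sum in $Y$. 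So completeness of the multiplier space attached to \emph{your} series does not force $(y_n)$ to converge, and no argument can finish your route. The repair --- the one used in the Ces\`{a}ro model \cite[Theorem 3.3]{al2} that this theorem imitates --- is a rescaling: pass to a subsequence with $\|y_{k+1}-y_k\|<4^{-k}$, set $z_k=2^{k}(y_{k+1}-y_k)$, so that still $\|z_k\|\leq2^{-k}$ and $\sum_kT_k$ remains $\ell_\infty(X)$-multiplier Cauchy, and feed in the genuine $c_0(X)$-multiplier $(2^{-k}x_0)_k$. Then $\sum_k(y_{k+1}-y_k)$ is $wR$-convergent to some $y_0\in Y$; since its partial sums $s_n$ are norm Cauchy, each scalar sequence $y^*(s_n)$ converges and regularity of the Riesz method identifies its limit with $y^*(y_0)$; finally a norm-Cauchy sequence that converges weakly converges in norm (weak lower semicontinuity of the norm), whence $y_n\to y_1+y_0$ and $Y$ is complete, with no appeal to Orlicz--Pettis at all.
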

It is well-known that if $Y$ is a Banach space, then $B(X:Y)$ is a Banach space. So, Theorem \ref{wy} and also Theorem \ref{y} can be used for proving completeness of $B(X:Y)$.

Now, we give the following theorem which characterizes the continuity of weak summing operator with $c_0(X)$-multiplier Cauchy series.
\begin{thm}\label{t33}
Let $X$ and $Y$ be normed spaces, and $T_k\in B(X:Y)$ for all $k\in\mathbb{N}$. Then, the summing  operator $\mathcal{S}$ defined by
\begin{eqnarray}\label{sum1}
\begin{array}{ccccl}
\mathcal{S} & : &M^\infty_{wR}\big(\sum_k T_k\big)&\longrightarrow &Y \\
          &    &x=(x_k)&\longmapsto &\mathcal{S}x=wR-\sum_k T_k x_{k}.
\end{array}
\end{eqnarray} is continuous if and only if the series $\sum_{k}T_{k}$ is $c_0(X)$-multiplier Cauchy.
\end{thm}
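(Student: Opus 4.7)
The plan is to mirror the proof of Theorem \ref{t3}, passing the bound on the strong Riesz means through the dual via Hahn--Banach to obtain the weak version.

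For the necessity direction, assume $\mathcal{S}$ is continuous and exploit the inclusion $\phi(X) \subseteq M^\infty_{wR}\big(\sum_k T_k\big)$ from (\ref{inc1}). For any finitely supported $x = \sum_{k=1}^n e^k \otimes x_k$ with $\|x_k\| \leq 1$, the partial sums of $\sum_j T_j x_j$ stabilize at $\sum_{k=1}^n T_k x_k$ from index $n$ onward, so the Riesz method (and hence the weak Riesz method) recovers this finite sum; thus $\mathcal{S}x = \sum_{k=1}^n T_k x_k$. The estimate $\|\mathcal{S}x\| \leq \|\mathcal{S}\|\cdot\|x\|_\infty \leq \|\mathcal{S}\|$ then gives $H := \sup_n G \leq \|\mathcal{S}\| < \infty$, with $G$ as defined in (\ref{eq213}), and by \cite[Theorem 1.3]{s1} this uniform boundedness is exactly the $c_0(X)$-multiplier Cauchy condition for $\sum_k T_k$.

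For sufficiency, suppose $\sum_k T_k$ is $c_0(X)$-multiplier Cauchy, so $H = \sup_n G < \infty$, and fix $x = (x_k) \in M^\infty_{wR}\big(\sum_k T_k\big)$ with $\mathcal{S}x = y_0$. Interchanging the order of summation in the Riesz mean of partial sums $s_k = \sum_{j=1}^k T_j x_j$ yields the Abel-type representation
\begin{eqnarray*}
\sigma_n := \frac{1}{R_n}\sum_{k=1}^n r_k s_k = \sum_{j=1}^n c_{nj}\, T_j x_j, \qquad c_{nj} := \frac{R_n - R_{j-1}}{R_n} \in [0,1],
\end{eqnarray*}
with the convention $R_0 := 0$. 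Since $(c_{nj} x_j / \|x\|)_{j=1}^n$ is a finite sequence of norm at most $1$ (the case $\|x\|=0$ being trivial), the definition of $H$ gives $\|\sigma_n\| \leq H\|x\|$ uniformly in $n$. For each $y^* \in Y^*$ with $\|y^*\| \leq 1$, the defining relation of $wR$-convergence produces $y^*(y_0) = \lim_{n\to\infty} y^*(\sigma_n)$, whence $|y^*(y_0)| \leq H\|x\|$; taking the supremum over such $y^*$ via Hahn--Banach yields $\|\mathcal{S}x\| = \|y_0\| \leq H\|x\|$, establishing continuity.

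The only mildly delicate point is the Abel rearrangement producing coefficients $c_{nj} \in [0,1]$, which is what lets us reduce the Riesz-mean estimate to the $c_0(X)$-multiplier Cauchy bound uniformly in $n$; the transfer from the strong estimate on $\sigma_n$ to the weak limit $y_0$ via Hahn--Banach is then routine and requires no additional hypothesis on $X$ or $Y$.
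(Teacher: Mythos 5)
Your proof is correct and follows essentially the same route as the paper's: the necessity direction uses the inclusion $\phi(X)\subseteq M^\infty_{wR}\big(\sum_k T_k\big)$ to bound the set $G$ of (\ref{eq213}) by $\|\mathcal{S}\|$ and invokes \cite[Theorem 1.3]{s1}, while the sufficiency direction derives $\|\mathcal{S}x\|\leq H\|x\|$ from the boundedness of $G$ via functionals $y^*\in B_{Y^*}$. The only difference is that you make explicit two steps the paper leaves implicit --- the Abel rearrangement showing the Riesz means have coefficients $c_{nj}\in[0,1]$ and hence are controlled by $H$, and the Hahn--Banach passage from the weak limit to the norm estimate --- which is a welcome tightening rather than a different argument.
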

\begin{proof}
Let us suppose that the summing operator $\mathcal{S}$ defined by (\ref{sum1}) is continuous and consider the set $G$ given by (\ref{eq213}). Then, the desired result follows from the inequality
\begin{eqnarray*}
\sup_{n\in\mathbb{N}}G=\left|wR-\sum_{k}T_{k}x_{k}\right|\leq \|\mathcal{S}\|,
\end{eqnarray*}
since the inclusion $\phi\subset M_{wR}^\infty\big(\sum_{k}T_{k}\big)$ holds.

Conversely, if $\sum_{k}T_{k}$ is $c_0(X)$-multiplier Cauchy series, then the set $G$ is bounded (see \cite[Theorem 1.3]{s1}) and so $H=\sup_{n\in\mathbb{N}}
G$. If $x=(x_{k})\in M^\infty_{wR}\big(\sum_k T_k\big)$, then the proof follows from the inequality
\begin{eqnarray*}
\|\mathcal{S}x\|=\left|R-\sum_k y^*\left(T_{k}x_{k}\right)\right|\leq H\|x\|
\end{eqnarray*}
for all $y^*\in B_{Y}$.
\end{proof}
From Theorem \ref{t33} and the conclusions due to Altay and Kama \cite{al2} and Karaku\c s and Ba\c sar \cite{fbmk2}, we have the following:
\begin{cor}\label{c5}
Let $X$ and $Y$ be two normed spaces, and $T_k\in B(X:Y)$ for all $k\in\mathbb{N}$. Then, the following statements are equivalent:
\begin{enumerate}
\item[(i)] The series $\sum_k T_k$ is $c_0(X)$-multiplier Cauchy.
\item[(ii)] $\mathcal{S}:M_{wf}^\infty\big(\sum_k T_k\big)\to Y$ is continuous.
\item[(iii)] $\mathcal{S}:M_{wC}^\infty\big(\sum_k T_k\big)\to Y$ is continuous.
\item[(iv)] $\mathcal{S}:M^\infty_{wR}\big(\sum_k T_k\big)\to Y$ is continuous.
\end{enumerate}
\end{cor}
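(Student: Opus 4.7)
The plan is to adapt the proof of Theorem \ref{t3} to the weakly $R$-convergent setting via Hahn--Banach duality: since $\|y_0\|=\sup_{\|y^*\|\le 1}|y^*(y_0)|$ and, for every $y^*\in Y^*$, the scalar $y^*(y_0)$ is the ordinary Riesz sum of $\sum_k y^*(T_kx_k)$, the whole problem reduces to bounding scalar Riesz means of operator-valued partial sums.

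For the ``only if'' direction, assume $\mathcal{S}$ is continuous and let $G$ be the set defined in (\ref{eq213}). The key observation is that on finitely supported multipliers the weak $R$-sum collapses to an ordinary finite sum. Indeed, given $n\in\mathbb{N}$ and $x_1,\dots,x_n\in X$ with $\|x_k\|\le 1$, set $z:=\sum_{k=1}^n e^k\otimes x_k\in\phi(X)\subset M^\infty_{wR}\bigl(\sum_k T_k\bigr)$. The partial sums of $\sum_k T_k z_k$ are eventually equal to $\sum_{k=1}^n T_k x_k$, and regularity of the Riesz method under (\ref{rieszc}) gives that the norm $R$-sum, hence the $wR$-sum, of $\sum_k T_k z_k$ is $\sum_{k=1}^n T_k x_k$. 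Since $\|z\|_{\ell_\infty(X)}\le 1$, this yields $\bigl\|\sum_{k=1}^n T_k x_k\bigr\|=\|\mathcal{S}z\|\le\|\mathcal{S}\|$, hence $\sup G\le\|\mathcal{S}\|<\infty$, and \cite[Theorem 1.3]{s1} forces $\sum_k T_k$ to be $c_0(X)$-multiplier Cauchy.

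For the ``if'' direction, assume $\sum_k T_k$ is $c_0(X)$-multiplier Cauchy, so $H:=\sup G$ is finite by \cite[Theorem 1.3]{s1}. Fix $x=(x_k)\in M^\infty_{wR}\bigl(\sum_k T_k\bigr)$ with $\|x\|>0$ and set $y_0:=wR$-$\sum_k T_k x_k$. Interchanging the order of summation in the Riesz mean of the partial sums gives
\begin{eqnarray*}
\sigma_n:=\frac{1}{R_n}\sum_{k=1}^n r_k\sum_{j=1}^k T_j x_j=\sum_{j=1}^n T_j\bigl(\alpha_{nj}\,x_j\bigr),\qquad\alpha_{nj}:=\frac{1}{R_n}\sum_{k=j}^n r_k\in[0,1].
\end{eqnarray*}
This Abel-type rearrangement is the main technical step: because each vector $\alpha_{nj}x_j/\|x\|$ has norm at most $1$, the definition of $H$ yields $\|\sigma_n\|\le H\|x\|$. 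For every $y^*\in Y^*$ with $\|y^*\|\le 1$, the continuity of $y^*$ combined with the definition of $wR$-convergence gives $y^*(y_0)=\lim_n y^*(\sigma_n)$, whence $|y^*(y_0)|\le H\|x\|$; taking the supremum over the unit ball of $Y^*$ produces $\|\mathcal{S}x\|=\|y_0\|\le H\|x\|$. Apart from this rearrangement, the only ingredients are regularity of the Riesz method and a standard duality argument, so no further difficulty is expected.
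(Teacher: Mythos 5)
Your argument for the equivalence of (i) and (iv) is correct and follows essentially the same route as the paper's Theorem \ref{t33}, which is what the corollary really rests on: finitely supported multipliers collapse the weak $R$-sum to an ordinary finite sum (giving $\sup G\leq\|\mathcal{S}\|$ and hence, via \cite[Theorem 1.3]{s1}, the $c_0(X)$-multiplier Cauchy property), and conversely the bound $\|\mathcal{S}x\|\leq H\|x\|$ comes from estimating the Riesz means of the partial sums. In fact you supply two steps the paper leaves implicit or states loosely: the Abel rearrangement $\frac{1}{R_n}\sum_{k=1}^{n}r_k\sum_{j=1}^{k}T_jx_j=\sum_{j=1}^{n}T_j(\alpha_{nj}x_j)$ with $\alpha_{nj}\in[0,1]$, which is exactly what makes the set $G$ of (\ref{eq213}) applicable to the Riesz means, and the Hahn--Banach passage from $|y^*(y_0)|\leq H\|x\|$ for all $y^*$ in the unit ball to $\|y_0\|\leq H\|x\|$; the displayed inequalities in the paper's proof of Theorem \ref{t33} blur both points.

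The gap is that the corollary asserts the equivalence of four statements and your proposal addresses only (i) and (iv). Statements (ii) and (iii) concern the multiplier spaces $M_{wf}^\infty\big(\sum_k T_k\big)$ and $M_{wC}^\infty\big(\sum_k T_k\big)$ built from weak almost convergence and weak Ces\`{a}ro convergence, and the paper obtains those equivalences by citing \cite{fbmk2} and \cite{al2}; as written, your proof does not yield them. The Ces\`{a}ro case (iii) does follow from your argument by specializing to $r=e$, since the Ces\`{a}ro mean is the Riesz mean with $r_k=1$, but the almost-convergence case (ii) requires either the citation to \cite{fbmk2} or a separate (structurally analogous) estimate for the means $\frac{1}{n}\sum_{k=m+1}^{m+n}s_k$ uniformly in $m$. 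You should add these two equivalences, by reference or by adaptation, to complete the proof of the corollary as stated.
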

\begin{thm}\label{lt1} Let $X$ be any normed space, $Y$ be a Banach space and $T_k\in B(X:Y)$ for all $k\in\mathbb{N}$. Then, the series $\sum_{k}T_{k}$ is $\ell_\infty(X)$-multiplier convergent if and only if the summing  operator $\mathcal{S}$ defined by (\ref{sum1}) is compact (weakly compact).
\end{thm}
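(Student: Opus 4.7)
The plan is to follow the proof of Theorem \ref{lt} essentially step for step, replacing norm (relative) compactness by weak (relative) compactness in the necessity direction and exploiting the fact that the norm-compactness conclusion of Theorem \ref{lt} already entails weak compactness in the sufficiency direction.

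For the necessity, assume that the summing operator $\mathcal{S}:M^\infty_{wR}(\sum_k T_k)\to Y$ defined by (\ref{sum1}) is weakly compact. I would introduce the same bounded set
\begin{eqnarray*}
H:=\left\{\sum_{k\in\mathcal{F}}e^k\otimes x_k : \mathcal{F}\subset\mathbb{N} \textrm{ finite},\ \|x_k\|\leq 1\right\}\subset M^\infty_{wR}\big(\sum_k T_k\big)
\end{eqnarray*}
used in the proof of Theorem \ref{lt}. Since the $wR$-sum of a finitely supported sequence coincides with the ordinary finite sum, the image $\mathcal{S}(H)$ is exactly the collection of all finite partial sums $\sum_{k\in\mathcal{F}}T_kx_k$ with $\|x_k\|\leq 1$, and by the weak compactness of $\mathcal{S}$ this set is relatively weakly compact in $Y$. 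Via the weak counterpart of \cite[Theorem 2.48]{s2}, this forces $\sum_k T_k x_k$ to be subseries weakly $R$-convergent for every $x=(x_k)\in\ell_\infty(X)$. A final application of the Orlicz--Pettis theorem for regular matrices \cite[Theorem 4.1]{a4} upgrades weak subseries $R$-convergence to subseries norm convergence, which is precisely $\ell_\infty(X)$-multiplier convergence of $\sum_k T_k$.

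For the sufficiency, assume $\sum_k T_k$ is $\ell_\infty(X)$-multiplier convergent. Combining the chain (\ref{inc1}) with the regularity of the Riesz method (so that norm convergence of $\sum_k T_k x_k$ implies both $R$- and $wR$-convergence to the same sum) yields $M^\infty_{wR}(\sum_k T_k)=\ell_\infty(X)$ together with $\mathcal{S}x=\sum_k T_k x_k$ in the norm topology. I would then define the operators $\mathcal{S}^R_n x := R-\sum_{k=1}^n T_k x_k$ exactly as in Theorem \ref{lt} and use \cite[Corollary 11.11]{s2} to conclude that $\sum_k T_k x_k$ converges uniformly on $\{(x_k):\|x_k\|\leq 1\}$. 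This forces the Riesz tails to vanish uniformly, so $\|\mathcal{S}^R_n-\mathcal{S}\|\to 0$, from which compactness of $\mathcal{S}$---and therefore weak compactness---follows by the same reasoning used at the end of the proof of Theorem \ref{lt}.

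The only step that differs in a substantive way from Theorem \ref{lt} is the passage from weak relative compactness of $\mathcal{S}(H)$ to subseries $wR$-convergence of $\sum_k T_k x_k$ in the necessity direction. I expect this to be the main obstacle: it is the weak analogue of the norm-topology step borrowed from \cite[Theorem 2.48]{s2} in Theorem \ref{lt}, and I would handle it by an Eberlein--Smulian style sequential extraction combined with the regularity of the Riesz matrix, which forces the weak limits of the partial Riesz means of a subseries to exist and define its weak $R$-sum.
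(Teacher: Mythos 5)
Your proposal is correct and follows essentially the same route as the paper, which in fact omits the proof of this theorem entirely with the remark that it ``can be given by the similar way used in proving Theorem \ref{lt}''; your reconstruction is precisely that adaptation, including the set $H$ of finitely supported multipliers, the appeal to the Orlicz--Pettis theorem for regular matrices in the necessity direction, and the approximation $\|\mathcal S^R_n-\mathcal S\|\to 0$ via uniform convergence in the sufficiency direction. You also correctly isolate the one genuinely new step (passing from weak relative compactness of $\mathcal S(H)$ to subseries $wR$-convergence), which is exactly where the paper's ``similar way'' glosses over the details.
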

\begin{proof}
Since the proof can be given by the similar way used in proving Theorem \ref{lt}, we omit details.
\end{proof}
By Theorem \ref{lt1} and the results in \cite{al2} and  \cite{fbmk2}, we have the following:
\begin{cor}
Let us suppose that $X$ and $Y$ are any normed spaces such that $Y$ is complete, and $T_k\in B(X:Y)$ for all $k\in\mathbb{N}$. Then the following statements are equivalent:
\begin{enumerate}
 \item[(i)] The series $\sum_k T_k$ is $\ell_\infty(X)$-multiplier convergent.
\item[(ii)]  $\mathcal{S}:M_{wf}^\infty(\sum_kT_k)\to Y$ is compact (weakly compact).
\item[(iii)]  $\mathcal{S}:M_{wC}^\infty(\sum_kT_k)\to Y$ is compact (weakly compact).
\item[(iv)] $\mathcal{S}:M^\infty_{wR}\big(\sum_k T_k\big)\to Y$ is compact (weakly compact).
    \end{enumerate}
\end{cor}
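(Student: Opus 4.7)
The plan is to mirror the two-part argument of Theorem \ref{lt}, replacing the space $M^\infty_{R}\big(\sum_k T_k\big)$ by $M^\infty_{wR}\big(\sum_k T_k\big)$ and the $R$-summing operator by its weak-$R$ counterpart defined in (\ref{sum1}). The outer scaffolding, namely the appeals to \cite[Theorem 2.48]{s2} and \cite[Corollary 11.11]{s2} together with the Orlicz--Pettis theorem for regular matrix methods \cite{a4}, transfers with essentially no change; all that is really needed is to verify that each invoked ingredient remains applicable when ``$R$'' is weakened to ``$wR$''.

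For necessity, I would assume $\mathcal{S}:M^\infty_{wR}\big(\sum_k T_k\big)\to Y$ is compact (respectively, weakly compact). For any $(x_k)\in\ell_\infty(X)$ with $\|x_k\|\leq 1$, the set
\[
H:=\Big\{\sum_{k\in\mathcal{F}}e^k\otimes x_k:\mathcal{F}\subset\mathbb{N}\textrm{ finite}\Big\}
\]
sits in $\phi(X)\subseteq M^\infty_{wR}\big(\sum_k T_k\big)$ and is bounded, so $\mathcal{S}(H)=\{\sum_{k\in\mathcal{F}}T_kx_k:\mathcal{F}\textrm{ finite}\}$ is relatively (weakly) compact in $Y$. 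Invoking \cite[Theorem 2.48]{s2} would then yield that $\sum_k T_kx_k$ is subseries weak-$R$ convergent, and the Orlicz--Pettis theorem for regular matrices \cite{a4} upgrades this to subseries norm convergence. Since $(x_k)$ was arbitrary in the unit ball of $\ell_\infty(X)$, it would follow that $\sum_k T_k$ is $\ell_\infty(X)$-multiplier convergent.

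For sufficiency, I would assume $\sum_k T_k$ is $\ell_\infty(X)$-multiplier convergent. By regularity of the Riesz method this gives $M^\infty_{wR}\big(\sum_k T_k\big)=\ell_\infty(X)$ and $\mathcal{S}x=\sum_k T_kx_k$ (norm sum) for every $x\in\ell_\infty(X)$. Setting $\mathcal{S}_n^R(x):=\sum_{k=1}^nT_kx_k$, an application of \cite[Corollary 11.11]{s2} would deliver uniform tail convergence on $\|x_k\|\leq 1$, equivalently $\|\mathcal{S}_n^R-\mathcal{S}\|\to 0$ in operator norm. Since each $\mathcal{S}_n^R$ is (weakly) compact and the class of (weakly) compact operators is closed in operator norm, $\mathcal{S}$ would inherit (weak) compactness.

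The main obstacle I anticipate is the forward direction: justifying the step from relative (weak) compactness of the set of finite partial sums of $\sum_k T_kx_k$ to its weak-$R$ subseries convergence, which is precisely where \cite[Theorem 2.48]{s2} has to be checked in the weak-Riesz setting exactly as it is used in Theorem \ref{lt}. Once that bridge is crossed, the Orlicz--Pettis theorem for regular matrices closes the loop and the rest of the argument is routine, so no genuinely new combinatorial or topological difficulty should appear in passing from the $R$-summing operator to its weak-$R$ analogue.
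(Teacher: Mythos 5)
Your treatment of (i)$\Leftrightarrow$(iv) is essentially the paper's own: the corollary rests on Theorem \ref{lt1}, whose proof the paper explicitly declares to be the same $wR$-adaptation of Theorem \ref{lt} that you carry out, with the same appeals to \cite[Theorem 2.48]{s2}, \cite[Corollary 11.11]{s2} and the Orlicz--Pettis theorem for regular matrices \cite{a4}. The one genuine omission is that the corollary asserts a four-way equivalence and you never touch (ii) and (iii). In the paper these are obtained by citing the corresponding compactness characterizations of the summing operator on $M_{wf}^\infty\big(\sum_k T_k\big)$ from \cite{fbmk2} and on $M_{wC}^\infty\big(\sum_k T_k\big)$ from \cite{al2}; to close the chain you must at least invoke those results (each of (ii), (iii), (iv) is equivalent to (i) separately, so nothing more is needed, but the citations cannot simply be dropped).

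A further caveat, which applies equally to the paper's own proof of Theorem \ref{lt}: in the sufficiency direction you assert that each truncation $\mathcal{S}_n^R x=\sum_{k=1}^n T_kx_k$ is (weakly) compact before passing to the operator-norm limit. Such a truncation is a finite sum of the maps $x\mapsto T_kx_k$, and these are (weakly) compact only when the individual operators $T_k$ are; no such hypothesis appears in the statement. (Indeed, taking $T_1$ to be the identity on an infinite-dimensional space and $T_k=0$ for $k\geq 2$ gives an $\ell_\infty(X)$-multiplier convergent series whose summing operator $x\mapsto x_1$ is not compact.) As written, the limiting argument shows only that $\mathcal{S}$ is a norm limit of the $\mathcal{S}_n^R$, not that it is (weakly) compact; since you are reproducing the paper's step verbatim this is not a divergence from the source, but it is a gap you inherit rather than repair.
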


Prior to passing to the next section, we present Proposition \ref{eq} which states, besides the inclusion given in (\ref{inc1}), the iclusion $M^\infty_{wR}\big(\sum_k T_k\big)\subseteq M^\infty_{R}\big(\sum_k T_k\big)$ also holds.
\begin{pr}\label{eq}
Let $X$ and $ Y$ be normed spaces. If $\sum_k T_k$ is $\ell_\infty(X)$-multiplier Cauchy, then $M^\infty_{wR}\big(\sum_k T_k\big)= M^\infty_{R}\big(\sum_k T_k\big)$.
\end{pr}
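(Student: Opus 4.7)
The plan is to observe that (\ref{inc1}) already provides $M^\infty_{R}(\sum_k T_k)\subseteq M^\infty_{wR}(\sum_k T_k)$, so only the reverse inclusion requires work. To that end, I would fix $x=(x_k)\in M^\infty_{wR}(\sum_k T_k)$, set $s_k=\sum_{j=1}^k T_jx_j$ and $\sigma_n=\frac{1}{R_n}\sum_{k=1}^n r_k s_k$, so that by hypothesis there exists $x_0\in Y$ with $y^*(\sigma_n)\to y^*(x_0)$ for every $y^*\in Y^*$. The task is then to upgrade this weak Riesz convergence to norm convergence $\sigma_n\to x_0$ in $Y$.

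First I would invoke the $\ell_\infty(X)$-multiplier Cauchy hypothesis on $\sum_k T_k$ applied to the multiplier $x\in\ell_\infty(X)$: this forces $(s_k)$ to be norm Cauchy in $Y$. Since $Y$ is not assumed complete, I would pass to its completion $\widehat{Y}$, in which $(s_k)$ must converge to some $\widehat{s}\in\widehat{Y}$; regularity of the Riesz method on the Banach space $\widehat{Y}$ then gives $\sigma_n\to\widehat{s}$ in norm in $\widehat{Y}$. In parallel, I would use the standard isometric identification $Y^*\cong\widehat{Y}^*$ obtained from the density of $Y$ in $\widehat{Y}$: every $y^*\in Y^*$ extends uniquely to a bounded linear functional on $\widehat{Y}$ and every element of $\widehat{Y}^*$ restricts to one on $Y$. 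This lifts the weak convergence $\sigma_n\rightharpoonup x_0$ from $Y$ to $\widehat{Y}$. Combined with $\sigma_n\to\widehat{s}$ strongly (hence weakly) in $\widehat{Y}$, uniqueness of weak limits forces $\widehat{s}=x_0\in Y$; therefore $\sigma_n\to x_0$ in the norm of $\widehat{Y}$, which agrees with that of $Y$ on elements of $Y$, and hence $x\in M^\infty_{R}(\sum_k T_k)$.

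The main obstacle is the possible non-completeness of $Y$: regularity of the Riesz method is immediate for norm-convergent sequences in a Banach space, but the sequence $(s_k)$ is only known to be Cauchy here, and the weak limit $x_0$ is a priori only given inside $Y$. The completion argument resolves both difficulties cleanly, since Riesz regularity is available in $\widehat{Y}$ and the isometric duality $Y^*\cong\widehat{Y}^*$ lets the weak limit information pass between $Y$ and $\widehat{Y}$ without loss. Everything else is routine bookkeeping built on Definition \ref{def12} and the shape of the Riesz means.
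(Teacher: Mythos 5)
Your proof is correct and follows essentially the same route as the paper's: use the $\ell_\infty(X)$-multiplier Cauchy hypothesis to make the partial sums norm Cauchy, obtain a norm limit in a complete isometric superspace, and then use the weak Riesz limit in $Y$ together with uniqueness of (weak) limits to identify that limit as an element of $Y$. The only cosmetic difference is that you work in the completion $\widehat{Y}$ with the identification $Y^*\cong\widehat{Y}^*$, whereas the paper embeds $Y$ into $Y^{**}$; your write-up is in fact the more carefully articulated of the two.
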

\begin{proof}
Let $X$ and $ Y$ be normed spaces, and $x=(x_k)\in M^\infty_{wR}\big(\sum_k T_k\big)$. Then, there exists $y\in Y$ such that
$R-\sum_ky^*(T_kx_k)=y^*(y)$
for every $y^*\in Y^{*}$. From hypothesis, since the partial sums of the series $ \sum_{k}T_kx_k$ form a Cauchy sequence in $Y$, there exists $y^{**}\in Y^{**}$ such that
$R-\sum_kT_kx_k=y^{**}.$
If we consider the uniqueness of the limit, then we have $y^{**}=y$. Therefore, $x=(x_k)\in M^\infty_{R}\big(\sum_k T_k\big)$.
\end{proof}

\section{A version of Orlicz-Pettis Theorem for Riesz summability}
The Orlicz-Pettis Theorem is one of the important results in the Theory of Functional Analysis. Many generalizations and applications of this theorem can be found in \cite{al2,leo1,leo2,s2,s3,s4,yuro}.
Before stating and proving the Orlicz-Pettis Theorem by means of Riesz convergence, we will give the following definition (see \cite{s2}):
\begin{definition}
The space $\lambda$ has the infinite gliding hump property ($\infty$-GHP) if whenever $x\in \lambda$ and $\{\sigma_m\}$ is an increasing sequence of intervals, there exist a subsequence $\{p_m\}$ and $t_{p_{m}}>0$, $t_{p_{m}}\to \infty$ such that every subsequence of $\{p_m\}$ has a further subsequence $\{q_m\}$ such that the coordinatewise sum of the series $\sum_mt_{q_{m}}\chi_{\sigma_{q_{m}}}x\in \lambda$.
\end{definition}

In the following, we give a new version of Orlicz-Pettis theorem for vector valued multiplier Riesz convergent space of OVS by using Antosik-Mikusinski matrix theorem.

\begin{thm}
Let $\lambda$ have $\infty$-GHP and $T_k\in B(X:Y)$ for all $k\in\mathbb{N}$. If the series $\sum_kT_k$ is $\lambda-$multiplier Riesz convergent with respect to weak topology of $Y$, then the series $\sum_kT_k$ is $\lambda-$multiplier Riesz convergent with respect to strong topology of $Y$.
\end{thm}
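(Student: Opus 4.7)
The plan is to argue by contradiction using the Antosik--Mikusi\'nski matrix theorem, following the template that Swartz uses for $\infty$-GHP versions of Orlicz--Pettis (as cited earlier in the paper, e.g., \cite{s2}), but with Riesz means of partial sums in place of partial sums themselves. So I fix $x=(x_k)\in\lambda$, assume $\sum_k T_k x_k$ is weakly Riesz convergent to some $y\in Y$, and suppose for contradiction that $\tau_n(x):=\frac{1}{R_n}\sum_{k=1}^n r_k s_k$ (where $s_k=\sum_{j=1}^k T_j x_j$) fails to converge to $y$ in norm. Then there exist $\epsilon_0>0$ and indices $n_1<n_2<\cdots$ with $\|\tau_{n_{i}}(x)-\tau_{n_{i-1}}(x)\|>\epsilon_0$, and by Hahn--Banach unit-norm $y_i^{*}\in Y^{*}$ detecting these differences. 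This gives a sequence of (increasing) intervals $\sigma_i=(n_{i-1},n_i]$ on which the block contribution to the Riesz mean is $y_i^{*}$-large.

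Next I would feed $x$ and $\{\sigma_i\}$ into the $\infty$-GHP to extract a subsequence $\{p_m\}$ and scalars $t_{p_m}>0$ with $t_{p_m}\to\infty$ such that for every further subsequence $\{q_m\}$ the ``gliding-hump sum'' $w^{(q)}:=\sum_m t_{q_m}\chi_{\sigma_{q_m}}x$ belongs to $\lambda$. By the standing hypothesis this $w^{(q)}$ is itself a $\lambda$-multiplier, so $\sum_k T_k w^{(q)}_k$ is weakly Riesz convergent; this is the scalar convergence statement I will need for the row-sum hypothesis of Antosik--Mikusi\'nski.

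Third, I would assemble the matrix
\begin{eqnarray*}
z_{ij}:=y_{p_i}^{*}\!\left(\tau_{n_{p_j}}\!\bigl(\chi_{\sigma_{p_j}}x\bigr)-\tau_{n_{p_j}-1}\!\bigl(\chi_{\sigma_{p_j}}x\bigr)\right),
\end{eqnarray*}
or a scaled variant absorbing the $t_{p_j}$, chosen so that: (i) the diagonal satisfies $|z_{ii}|\ge\epsilon_0$, by construction of the $y_i^{*}$ and $\sigma_i$; (ii) for fixed $j$, $z_{ij}\to 0$ as $i\to\infty$, because the Riesz-mean block vector is a fixed element of $Y$ and weak Riesz convergence of the unit columns forces the $y_{p_i}^{*}$-values along this column to tend to the scalar limit (which is zero on off-diagonal blocks); and (iii) for every subsequence $\{q_m\}$ of $\{p_m\}$, the row sum $\sum_j z_{i,q_j}$ telescopes, via linearity of $\tau_n$, into a $y_{p_i}^{*}$-evaluation of the Riesz mean of $w^{(q)}$, which converges since $w^{(q)}\in\lambda$ is weakly Riesz convergent. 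Applying the Antosik--Mikusi\'nski matrix theorem then forces $z_{ii}\to 0$, contradicting $|z_{ii}|\ge\epsilon_0$.

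The main obstacle will be Step 3: arranging the block decomposition of the Riesz means so that row sums of $z_{ij}$ really do telescope into the Riesz mean of the single composite multiplier $w^{(q)}$ delivered by $\infty$-GHP. The algebraic identity $\tau_n(x)=\sum_{j=1}^n\!\bigl(1-R_{j-1}/R_n\bigr)T_j x_j$ makes Riesz means linear in the multiplier, so the block contributions split cleanly; however, handling the tail weights $1-R_{j-1}/R_n$ across block boundaries, together with correctly absorbing the $t_{p_m}\to\infty$ normalizations so that diagonal entries stay of size $\ge\epsilon_0$ while rows and columns remain bounded, is the delicate bookkeeping that has to be carried out carefully.
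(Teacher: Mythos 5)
Your overall strategy coincides with the paper's: argue by contradiction, extract an increasing sequence of intervals on which Riesz block sums stay $\epsilon$-large against norm-one functionals, feed $x$ and these intervals into the $\infty$-GHP, and build a matrix whose diagonal is bounded away from zero while its columns vanish and its row sums (along every further subsequence) converge, so that the Antosik--Mikusi\'nski theorem yields the contradiction. The paper's matrix is $h_{ij}=\sum_{m\in\sigma_{p_j}}\frac{y^{*}_{p_i}}{t_{p_i}}\bigl(T_m(t_{p_j}x_m)\bigr)$, i.e.\ your $z_{ij}$ scaled by $t_{p_j}/t_{p_i}$, and both proofs leave the same Riesz-weight bookkeeping across block boundaries implicit.

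There is, however, one step in your plan that fails as stated: your justification of column convergence. You claim that for fixed $j$ the entries $z_{ij}\to 0$ as $i\to\infty$ ``because the Riesz-mean block vector is a fixed element of $Y$ and weak Riesz convergence \ldots forces the $y_{p_i}^{*}$-values along this column to tend to the scalar limit.'' A merely norm-bounded sequence $(y_{p_i}^{*})$ applied to a fixed nonzero vector of $Y$ need not tend to zero (take $Y=\mathbb{R}$ and $y_i^{*}=\mathrm{id}$); nothing in the construction makes the $y_{p_i}^{*}$ weak-$*$ null. This is precisely why the $t_{p_i}\to\infty$ delivered by $\infty$-GHP must enter the matrix through the \emph{row} index: dividing row $i$ by $t_{p_i}$ kills the columns (bounded functional on a fixed vector, divided by $t_{p_i}\to\infty$), multiplying column $j$ by $t_{q_j}$ makes the row sums into evaluations of $\frac{1}{t_{p_i}}y_{p_i}^{*}$ on the Riesz means of the composite multiplier $w^{(q)}=\sum_m t_{q_m}\chi_{\sigma_{q_m}}x\in\lambda$ (which are weakly Riesz convergent by hypothesis), and the two scalings cancel on the diagonal so that $|h_{ii}|>\epsilon$ survives. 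You gesture at this with ``or a scaled variant absorbing the $t_{p_j}$,'' but the scaling is not optional fine-tuning: without the $1/t_{p_i}$ in the rows the column-convergence hypothesis of Antosik--Mikusi\'nski is simply not available, so you should commit to the scaled matrix from the outset.
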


\begin{proof}
Let $\epsilon>0$. If the conclusion is false, there exists $x\in \lambda$, $(y_n^*)\subset Y^*$ bounded sequence and an increasing sequence of intervals $\{\sigma_n\}$ such that
\begin{eqnarray}\label{eq4.1}
\left|R-\sum_{k\in \sigma_n}y_n^*(T_kx_k)\right|>\epsilon
\end{eqnarray}
for all $n\in \mathbb N$. Since $\lambda$ has $\infty$-GHP, there exist a subsequence $\{p_n\}$ and $t_{p_{n}}>0$, $t_{p_{n}}\to \infty$ such that every subsequence of $\{p_n\}$ has a further subsequence $\{q_n\}$ such that $\sum_nt_{q_{n}}\chi_{\sigma_{q_{n}}}x\in \lambda$.
Now, we consider the matrix $H=[h_{ij}]$ defined by
\begin{eqnarray*}
h_{ij}=\sum_{m\in \sigma_{p_{j}}}\frac{y^*_{p_{i}}}{t_{p_{i}}}(T_m(t_{p_{j}}x_m))
\end{eqnarray*}
for all $i,j\in\mathbb{N}$. Since $(y_i^*)$ is bounded and $t_{p_{i}}\to \infty$, the columns of $H$ converge to 0.  On the other hand, since $\lambda$ has $\infty$-GHP, we have $\sum_nt_{q_{j}}\chi_{\sigma_{q_{j}}}x\in \lambda$ for every subsequence $\{q_j\}$ and hence the subseries
\begin{eqnarray*}
\sum_j\sum_{m\in \sigma_{q_{j}}}T_m(t_{q_{j}}x_m)
\end{eqnarray*}
is weakly Riesz convergent. Then, we have
\begin{eqnarray*}
\lim_{i\to\infty}\sum_ja_{iq_{j}}=\lim_{i\to\infty}\sum_j\sum_{m\in \sigma_{q_{j}}}\frac{y^*_{p_{i}}}{t_{p_{i}}}(T_m(t_{q_{j}}x_m)).
\end{eqnarray*}
From Antosik-Mikusinski theorem (\cite{s2} Appendix D), the matrix $H$ is Riesz convergent to zero as diagonal, which is contradiction with (\ref{eq4.1}).
\end{proof}
\section{Conclusion}
P\'{e}rez-Fern\'{a}ndez et al. \cite{a5} gave some properties of a normed space $X$ like completeness and barrelledness through the $wuC$ series in $X$ and weakly* unconditionally Cauchy series in $X^*$. Aizpuru et al. \cite{a1} gave a new characterization of $wuC$ and $uc$ series through Ces\`{a}ro summability method and studied the new spaces associated to the series in a Banach space for proving completeness and barrelledness of normed spaces, and obtained a new version of Orlicz-Pettis theorem in scalar case. They also characterized the $wuC$ series by means of continuity of a linear mapping from these new spaces to a normed space. Aizpuru et al. \cite{a9} gave a new characterization of $wuC$ and $uc$ series via the completeness of some new subspaces of $\ell_\infty$ obtained from almost convergence and also presented a new version of Orlicz-Pettis theorem. In \cite{a4}, they also generalized the results given in \cite{a1} to any regular matrix. In recent times,
Karaku\c s and Ba\c sar \cite{fbmk1} introduced a slight generalization of almost convergence and gave some new multiplier spaces associated to the series $\sum_kx_k$ in a normed space by means of this new summability method. Swartz constructed some new versions of the Orlicz-Pettis theorem for multiplier convergent series under continuity assumptions on some linear operators, and gave applications to spaces of continuous linear operators, \cite{s3,s4}.  In \cite{yuro}, Yuanghong and Ronglu proved that multiplier convergence of OVS is closely related to the $AK$ property of the sequence spaces, and obtained corresponding versions of Orlicz-Pettis theorems. Kama and Altay \cite{al3} obtained some new multiplier space by using Fibonacci sequence spaces, and Kama et al. \cite{al4} gave similar results for multiplier spaces which were obtained from backward difference matrix. Most recently, Le\'{o}n-Saavedra et al. \cite{leo1,leo2} also obtained some new versions of Orlicz-Pettis theorem by using $w_p$-summability and a general summability methods.

Swartz \cite{s1} extended these studies (which are concerned with the scalar valued multiplier spaces) to the case of operator valued series and vector valued multipliers. Later, Altay and Kama \cite{al2}  introduced the vector valued multiplier spaces by means of Ces\`{a}ro convergence and a sequence of continuous linear operators.  Karaku\c s  gave the vector valued multiplier spaces $S_{\Lambda}(\mathcal{T})$ and $S_{w\Lambda}(\mathcal{T})$ by means of $\Lambda$-convergence and a series of bounded linear operators together with the characterization the completeness of both of the normed spaces $S_{\Lambda}(\mathcal{T})$ and $S_{w\Lambda}(\mathcal{T})$ via $c_0(X)$-multiplier convergence of an operator series, \cite{karakus1}. Quite recently, Karaku\c s and Ba\c sar \cite{fbmk2,fbmk4} and Kama \cite{kama} introduced the vector valued multiplier spaces of almost summability, a generalized almost summability and statistical Ces\`{a}ro summability, respectively.

It is worth noting to the reader that one can obtain corresponding results of the present paper by using any regular matrix $A=(\alpha_{nk})_{n,k\in\mathbb{N}}$ instead of the Riesz matrix.

\end{document}